\newcommand{\ex}{\mathrm{ex}}
\newcommand{\Ex}{\mathrm{Ex}}
\newcommand{\co}{\mathrm{Co}}
\newcommand{\ro}{\mathrm{Ro}}
\newtheorem{theorem}{Theorem}
\newtheorem{lemma}{Lemma}
\newtheorem{fact}{Fact}
\def\le{\leqslant}
\def\ge{\geqslant}
\begin{document}
%\pagewiselinenumbers

\title{Refined Tur\'an numbers and  Ramsey numbers for the loose 3-uniform path of length three}

\author{Joanna Polcyn
\\A. Mickiewicz University\\Pozna\'n, Poland\\{\tt joaska@amu.edu.pl}
 \and Andrzej Ruci\'nski\thanks{Research supported by the Polish NSC
grant 2014/15/B/ST1/01688.
}
\\A. Mickiewicz University\\Pozna\'n, Poland\\{\tt rucinski@amu.edu.pl}
}

\date{\today}

\maketitle

\begin{abstract}
Let $P$ denote  a 3-uniform hypergraph consisting of 7 vertices $a,b,c,d,e,f,g$ and 3 edges
$\{a,b,c\}, \{c,d,e\},$ and $\{e,f,g\}$. It is known that the $r$-color Ramsey number for $P$ is
 $R(P;r)=r+6$  for $r\le 7$. The proof of this result relies on a careful analysis
of the Tur\'an numbers for $P$. In this paper, we refine this analysis  further and compute, for all $n$, the third and fourth order Tur\'an numbers for $P$.
With the help of the former, we  confirm the formula
$R(P;r)=r+6$ for $r\in\{8,9\}$.
\end{abstract}

%--------------------------------------------------------------------------------------------------------------------------------------------------
\section{Introduction}\label{intro}
For brevity, 3-uniform hypergraphs will be called here \emph{ $3$-graphs}.
 Given a family of $3$-graphs $\mathcal F$, we say that a $3$-graph $H$ is  \emph{$\mathcal F$-free}
if for all $F \in \mathcal F$ we have  $H \nsupseteq F$.

 For a family of $3$-graphs $\mathcal F$ and an integer $n\ge1$, the \textit{ Tur\'an number of the 1st order}, that is, the ordinary Tur\'an number, is defined as
    $$
    \mathrm{ex}^{(1)}(n; \mathcal F)=\max\{|E(H)|:|V(H)|=n\;\mbox{ and $H$ is
    $\mathcal F$-free}\}.
    $$
 Every $n$-vertex $\mathcal F$-free $3$-graph with $\ex^{(1)}(n;\mathcal F)$
 edges is called  \emph{1-extremal for~$\mathcal F$}.
  We denote by $\mathrm{Ex}^{(1)}(n;\mathcal F)$
  the family of all, pairwise non-isomorphic, $n$-vertex $3$-graphs which are 1-extremal for $\mathcal F$.
Further,  for an  integer $s\ge1$,
 \textit{the Tur\'an number of the $(s+1)$-st order} is defined as
\begin{eqnarray*}
\mathrm{ex}^{(s+1)}(n;\mathcal F)=\max\{|E(H)|:|V(H)|=n,\; \mbox{$H$ is
    $\mathcal F$-free, and }\\
 \forall H'\in \mathrm{Ex}^{(1)}(n;\mathcal F)\cup...\cup\mathrm{Ex}^{(s)}(n;\mathcal F),  H\nsubseteq H'\},
\end{eqnarray*}
if such a $3$-graph $H$ exists. Note that if $\mathrm{ex}^{(s+1)}(n;\mathcal F)$ exists then, by definition, 
\begin{equation}\label{decrease}
\mathrm{ex}^{(s+1)}(n;\mathcal F)<\mathrm{ex}^{(s)}(n;\mathcal F).
\end{equation}

An $n$-vertex $\mathcal F$-free $3$-graph $H$ is called \textit{(s+1)-extremal for} $\mathcal F$ if $|E(H)| = \mathrm{ex}^{(s+1)}(n;\mathcal F)$ and  $\forall H'\in \mathrm{Ex}^{(1)}(n;\mathcal F)\cup...\cup\mathrm{Ex}^{(s)}(n;\mathcal F),  H\nsubseteq H'$; we denote by $\mathrm{Ex}^{(s+1)}(n;\mathcal F)$ the family of $n$-vertex $3$-graphs which are $(s+1)$-extremal for $\mathcal F$.
 In the case when $\mathcal F=\{F\}$, we will write $F$ instead of  $\{ F\}$.

\emph{A loose 3-uniform path of length 3} is a 3-graph $P$ consisting of 7 vertices, say,
$a,b,c,d,e,f,g$, and 3 edges $\{a,b,c\}, \{c,d,e\},$ and $\{e,f,g\}$.
The \textit{Ramsey number} $R(P;r)$ is the least integer $n$ such that  every $r$-coloring
of the edges of the complete $3$-graph $K_n$ results in a monochromatic copy of $P$. Gyarfas and Raeisi \cite{GR} proved, among many other results, that $R(P;2)=8$. (This result was later extended to loose paths of arbitrary lengths, but still $r=2$, in \cite{Omidi}.)
Then Jackowska \cite{J} showed that $R(P;3)=9$  and $r+6\le R(P;r)$ for all $r\ge3$.
In turn, in \cite{JPR} and \cite{JPRr}, Tur\'an numbers of the first and second order,  $\ex^{(1)}(n;P)$ and  $\ex^{(2)}(n;P)$, have been determined for all feasible values of $n$, as well as the single third order  Tur\'an number $\ex^{(3)}(12;P)$. Using these numbers, in \cite{JPRr}, we were able to compute the Ramsey numbers $R(P;r)$ for $r=4,5,6,7$.

 \begin{theorem}[\cite{GR, J,JPRr}]\label{past}
    For all $r\le 7$, $R(P; r)=r+6$.
\end{theorem}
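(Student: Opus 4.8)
\emph{Proof proposal.} The plan is to prove the two inequalities $R(P;r)\ge r+6$ and $R(P;r)\le r+6$ separately, the first by an explicit coloring and the second by a counting argument driven by the Tur\'an number hierarchy. For the lower bound I would exhibit, for every $r$, a $P$-free $r$-coloring of $K_{r+5}^{(3)}$. Fix vertices $v_1,\dots,v_{r-1}$ and let $W$ be the remaining $6$ vertices, so $|V|=r+5$. For $i\le r-1$ let color $i$ consist of all triples whose least-indexed special vertex is $v_i$ (equivalently, all edges through $v_i$ avoiding $v_1,\dots,v_{i-1}$), and let color $r$ consist of all triples inside $W$. Each color $i\le r-1$ is a sub-$3$-graph of the star centred at $v_i$; since the three edges of $P$ have no common vertex, a star contains no copy of $P$, and neither does any of its subgraphs. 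Color $r$ lives on only $6<7$ vertices, so it is $P$-free as well. This recovers the lower bound $R(P;r)\ge r+6$ for all $r$.

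For the upper bound set $n=r+6$ and suppose, for contradiction, that $K_n^{(3)}$ has an $r$-coloring with color classes $H_1,\dots,H_r$, all $P$-free, so that $\sum_{i}|E(H_i)|=\binom n3$. The first tool is the crude bound $\binom n3\le r\,\ex^{(1)}(n;P)$. Using $\ex^{(1)}(n;P)=\binom{n-1}{2}$ this reads $\tfrac n3\le n-6$, which already fails for $n\le 8$; thus $r\le 2$ is settled by counting alone (recovering $R(P;2)=8$ of \cite{GR}). At $n=9$, that is $r=3$, the inequality becomes an equality, which forces every $|E(H_i)|$ to equal $\ex^{(1)}(9;P)=28$, hence every $H_i$ to be $1$-extremal; since the only $1$-extremal $3$-graph here is the star, and two distinct stars always share the triple through their centres, three edge-disjoint stars cannot exist, so $R(P;3)\le 9$, matching \cite{J}.

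For $r\ge4$ the crude bound is no longer sufficient and I would pass to the refined Tur\'an numbers. The mechanism is that the classes are pairwise edge-disjoint while the near-extremal $P$-free $3$-graphs are essentially stars: once one class occupies (most of) a star centred at some $v$, every other large class must avoid the edges through $v$, and the largest a $P$-free class avoiding such a configuration can be is exactly what $\ex^{(2)}(n;P),\ex^{(3)}(n;P),\dots$ measure. Iterating this bookkeeping along a suitable ordering of the classes should yield an estimate of the shape $\binom n3=\sum_i|E(H_i)|\le\ex^{(1)}(n;P)+\ex^{(2)}(n;P)+\cdots$, and one then checks that the right-hand side is strictly smaller than $\binom n3$ for each $n=10,11,12,13$, the contradiction proving $R(P;r)\le r+6$ for $r=4,5,6,7$ as in \cite{JPRr}.

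The hard part will be exactly this last step. The counting is tight near these values of $n$, so the argument cannot rest on the Tur\'an \emph{numbers} alone but needs the precise extremal \emph{families} $\Ex^{(1)}(n;P),\Ex^{(2)}(n;P),\dots$ together with packing facts describing how edge-disjoint near-extremal classes may coexist; in particular one must rule out the boundary configurations in which the sum of Tur\'an numbers would meet $\binom n3$. Establishing these structural results is the technical heart, and it is precisely this analysis, carried through orders three and four in the present paper, that is required to push the same scheme to $r\in\{8,9\}$.
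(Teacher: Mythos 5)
Your lower-bound construction and the cases $r\le 3$ are sound: the ``nested stars plus a $K_6$'' coloring is exactly the standard one, and at $n=9$ the equality $\binom 93=3\binom 82$ together with $\Ex^{(1)}(9;P)=\{S_9\}$ forces three pairwise edge-disjoint full stars, which is impossible. (Note that the paper itself does not prove this theorem --- it quotes it from \cite{GR,J,JPRr} --- but its own proofs of Lemma \ref{strength} and Theorem \ref{main} display the method that is needed.) The genuine gap is in the step you yourself flag as ``the hard part,'' and it is not merely technical: the inequality $\sum_i|E(H_i)|\le \ex^{(1)}(n;P)+\ex^{(2)}(n;P)+\dots+\ex^{(r)}(n;P)$ that your bookkeeping is meant to produce is \emph{false} for edge-disjoint $P$-free classes, under any ordering. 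The definition of $\ex^{(s+1)}(n;P)$ bounds only those $P$-free $3$-graphs that are not subgraphs of \emph{any} $j$-extremal $3$-graph with $j\le s$; being edge-disjoint from one extremal class does not put a graph into that category. Concretely, at $n=13$ let $H_1$ be the full star centered at $v_1$ and let $H_2$ consist of all triples through $v_2$ avoiding $v_1$. These are edge-disjoint and $P$-free, yet $|E(H_2)|=\binom{11}{2}=55>40=\ex^{(2)}(13;P)$, because $H_2$ is still a subgraph of a full star --- just a different one. So the sum of higher-order Tur\'an numbers does not dominate the color-class sizes, and no ``suitable ordering'' of the classes can rescue the scheme.

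What actually closes the argument, both in \cite{JPRr} for $4\le r\le 7$ and in this paper for $r\in\{8,9\}$, is an induction on $r$ by \emph{vertex deletion}, not a global edge count. Pigeonhole gives one class with at least $\binom n3/r$ edges; the Tur\'an theorems (Theorems \ref{ex1}, \ref{ex2}, \ref{ex3}) then pin down its structure: it is contained in a star, in $K_6\cup K_{n-6}$, or in a comet. Deleting the center of that structure (and, for a comet, also the edge spanned by its head) annihilates that entire color class and leaves an $(r-1)$-coloring of $K_{n-1}$, $K_{n-1}-e$, or $K_{n-1}-2e$ on which one can induct. This is precisely why the induction must be run on the strengthened statement $K_n-2e\to(P;n-6)$ (the role of Lemma \ref{strength} here), and why a separate packing argument is needed at the boundary case $n=12$, where one must exclude three edge-disjoint $36$-edge subgraphs of the complete $6$ by $6$ bipartite $3$-graph, each contained in a copy of $K_6\cup K_6$. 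Your proposal correctly senses that extremal \emph{families} and packing facts matter, but without the deletion induction and these strengthened ``deficient'' Ramsey statements, the counting framework you set up does not go through.
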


In this paper we determine, for all $n\ge7$, the  Tur\'an numbers for $P$ of the third and fourth order, $\ex^{(3)}(n;P)$ and $\ex^{(4)}(n;P)$. The former allows us to compute two more Ramsey numbers.

 \begin{theorem}\label{main}
    For all $r\le 9$, $R(P; r)=r+6$.
\end{theorem}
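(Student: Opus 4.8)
The lower bound $R(P;r)\ge r+6$ holds for all $r\ge 3$ by the construction cited in Theorem~\ref{past}, so the real task is the two upper bounds $R(P;8)\le 14$ and $R(P;9)\le 15$; equivalently, I must show that for $n=r+6$ with $r\in\{8,9\}$, every $r$-coloring of the triples of $K_n$ produces a monochromatic copy of $P$. I would argue by contradiction: suppose the color classes $H_1,\dots,H_r$ are all $P$-free, so $\sum_{i=1}^r|E(H_i)|=\binom{n}{3}$ and each $|E(H_i)|\le\ex^{(1)}(n;P)=\binom{n-1}{2}$. Since a direct comparison of $r\binom{n-1}{2}$ with $\binom{n}{3}$ leaves a positive slack for every $r\ge 3$, the contradiction cannot come from first-order counting alone; it must come from the rigidity of the extremal and near-extremal $P$-free 3-graphs, which is exactly what the now fully determined numbers $\ex^{(2)},\ex^{(3)}$ and the accompanying descriptions of the families $\Ex^{(1)},\Ex^{(2)}(n;P)$ supply.

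The engine is the following dichotomy, read off from the third-order theory: if a $P$-free 3-graph on $n$ vertices has more than $\ex^{(3)}(n;P)$ edges, then by definition of $\ex^{(3)}$ it is contained in a member of $\Ex^{(1)}(n;P)\cup\Ex^{(2)}(n;P)$, and each such host has a well-defined \emph{center}---for the unique 1-extremal graph, the full star $S_v$ of all triples through a fixed vertex $v$, and for the explicitly described 2-extremal graphs, the analogous distinguished vertex. I would call a color class \emph{large} if $|E(H_i)|>\ex^{(3)}(n;P)$ and \emph{small} otherwise, and assign to each large class its center $v_i$. A triple lying in a large class must contain that class's center, so the set $C$ of centers meets every triple whose color is large. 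Because $|C|\le r=n-6<n-2$, at least three vertices lie outside $C$, and the $\binom{n-|C|}{3}\ge\binom{6+b}{3}$ triples they span (where $b$ is the number of small classes) cannot receive a large color and so must be absorbed by the small classes.

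The heart of the matter is a deficiency count. Writing $d_i=\binom{n-1}{2}-|E(H_i)|$, the identity $\sum_i d_i=r\binom{n-1}{2}-\binom{n}{3}=\tfrac13(r-3)(r+4)(r+5)$ pins down the total deficiency exactly, while the covering obstruction above forces a controlled number of classes down to the second or third level, each contributing a quantified deficiency of at least $\binom{n-1}{2}-\ex^{(2)}(n;P)$ or $\binom{n-1}{2}-\ex^{(3)}(n;P)$. The plan is to show that covering the exposed core triples with $b$ small classes is incompatible with this deficiency budget once $r\in\{8,9\}$: every distribution of colors that succeeds in covering all of $K_n$ overspends, which is the desired contradiction. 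For $r\le 7$ this balancing was carried out in \cite{JPRr} using only the single value $\ex^{(3)}(12;P)$; the new ingredient is that the third-order numbers are now available at the larger orders $n=14,15$ that $r=8,9$ force.

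The main obstacle, and where I expect the real work to concentrate, is the bookkeeping in the intermediate regime where several classes are only near-stars with possibly coinciding or mutually interacting centers: one must rule out configurations in which a few cleverly overlapping 2- and 3-level classes cover the exposed triples too cheaply. This demands the precise structure of $\Ex^{(2)}(n;P)$ and $\Ex^{(3)}(n;P)$, not merely the numerical values $\ex^{(2)},\ex^{(3)}$, in order to prove that any class covering a triple disjoint from the bulk of $C$ must shed enough edges elsewhere to break the count. Verifying that the third-order data is exactly sharp up to $r=9$---and isolating why $r\ge 10$ would require still higher-order Tur\'an numbers---is the delicate endpoint of the argument.
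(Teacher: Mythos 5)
Your overall strategy differs from the paper's, but as written it has two genuine gaps. First, the covering claim at its core is false: for $n\in\{14,15\}$ the family $\Ex^{(1)}(n;P)\cup\Ex^{(2)}(n;P)$ equals $\{S_n,\co(n)\}$, and the comet $\co(n)$ has an edge (the one spanned by its head) that does \emph{not} contain its center. So a ``large'' color class contained in $\co(n)$ can color one triple disjoint from your center set $C$, and $C$ does not meet every triple carrying a large color. This single escaping edge is not a technicality: it is exactly why the paper is forced to prove its Lemma~\ref{strength} about $K_n-e$ and $K_n-2e$ rather than about $K_n$ --- when one deletes the center of a comet-shaped class, its head edge survives and must be tracked as a missing edge from then on.

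Second, and more seriously, the contradiction you aim for is never derived, and the ingredients you allow yourself do not suffice to derive it. Your deficiency identity is correct ($\sum_i d_i=\tfrac13(r-3)(r+4)(r+5)$, i.e.\ $364$ for $r=9$), but it is far too loose: a ``small'' class at $n=15$ costs only $\binom{14}{2}-\ex^{(3)}(15;P)=91-49=42$, so even eight small classes fit inside the budget. The covering count also fails in the very regime you flag: with, say, six large classes having distinct centers and three small ones at $r=9$, the exposed $\binom{9}{3}=84$ triples, less at most $6$ head edges, leave $78$ triples to be covered by three $P$-free restrictions on $9$ vertices, each of size at most $\ex^{(1)}(9;P)=\binom{8}{2}=28$, and $3\cdot 28=84\ge 78$ --- no contradiction. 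Closing such cases forces you to recurse inside the exposed vertex set (a nearly extremal small class restricted there must itself be a star, whose center one then deletes, and so on), which essentially reconstructs the paper's actual proof: pigeonhole one large class, show via Theorems~\ref{ex1}, \ref{ex2} and \ref{ex3_cale} that it is a star or a sub-3-graph of a comet, delete its center (and head edge), and induct downward on colorings of $K_m-e$ and $K_m-2e$ with one fewer color. The engine of that induction is Lemma~\ref{strength} ($K_m-2e\to(P;m-6)$ for $9\le m\le 13$), whose hardest case $m=12$ needs a separate structural analysis of colorings of the complete $6\times 6$ bipartite 3-graph; nothing in your proposal supplies this part of the argument, and the steps you defer as ``bookkeeping'' are precisely where the proof lives.
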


It seems that in order to make a further progress in computing the Ramsey numbers $R(P;r)$, $r\ge10$, one would need to determine  higher order Tur\'an numbers $\mathrm{ex}^{(s)}(n;P)$, at least for some small values of $n$. Unfortunately, the forth order numbers are not good enough.

Throughout, we denote by $S_n$ the 3-graph on $n$ vertices and with $\binom {n-1}2$ edges, in which one vertex, referred to as \emph{the center}, forms  edges with all  pairs of the remaining vertices.
Every sub-3-graph of $S_n$ without isolated vertices is called \emph{a star}, while $S_n$ itself is called  \emph{the full star}. We  denote by $C$ \emph{the triangle}, that is, a 3-graph with six vertices $a,b,c,d,e,f$ and three edges $\{a,b,c\}$, $\{c,d,e\}$, and $\{e,f,a\}$. Finally, $M$ stands for a pair of disjoint edges.

 In the next section we state all, known and new, results on ordinary and  higher order Tur\'an numbers for $P$, including Theorem \ref{ex3_cale} which provides a complete formula for  $\ex^{(3)}(n;P)$.  We also define conditional Tur\'an numbers and quote from  \cite{JPRr}  three useful
  lemmas  about the  conditional Tur\'an numbers  with respect to  $P$,  $C$,  $M$.
 Then, in Section \ref{proofRam},   we prove Theorem \ref{main}, while
 the remaining sections  are  devoted to proving Theorem \ref{ex3_cale}.

\section{Tur\'an numbers}\label{turan}

A  celebrated result of Erd\H os, Ko, and Rado \cite{EKR} asserts that for $n\ge 6$, $\mathrm{ex}^{(1)}(n;M)=\binom{n-1}2$. Moreover, for $n\ge7$,  $\mathrm{Ex}^{(1)}(n;M)=\{S_n\}$.
We will need the second order version of this Tur\'an number, together with the 2-extremal family. Such a result has been proved already by Hilton and Milner \cite{HM} (see \cite{FF2} for a simple proof). For a given set of vertices $V$, with $|V|=n\ge 7$, let us define two special 3-graphs. Let $x,y,z,v\in V$ be four different vertices of $V$. We set
$$
G_1(n)=\left\{\{x,y,z\}\right\}\cup\left\{h\in \binom{V}{3}: v\in h, h\cap \{x,y,z\}\neq\emptyset\right\},
$$
$$
G_2(n)=\left\{\{x,y,z\}\right\}\cup\left\{h\in \binom{V}{3}: |h\cap \{x,y,z\}|=2\right\}.
$$
Note that for  $i\in \{1,2\}$, $G_i(n)\not\supset M$ and $|G_i(n)|=3n-8$.

\begin{theorem}[\cite{HM}]\label{ntif}
	For  $n\ge 7$,  $\mathrm{ex}^{(2)}(n;M)=3n-8$ and $\Ex^{(2)}(n;M)=\{G_1(n),G_2(n)\}$.
\end{theorem}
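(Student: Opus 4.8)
The plan is to restate the problem in the language of intersecting families and then prove a Hilton--Milner-type bound directly. A $3$-graph is $M$-free precisely when it is \emph{intersecting}, i.e. every two of its edges share a vertex. By the Erd\H os--Ko--Rado result quoted above, the only $1$-extremal $M$-free $3$-graph is the full star $S_n$, and a $3$-graph embeds in some $S_n$ if and only if all its edges pass through one common vertex. Hence $\ex^{(2)}(n;M)$ is exactly the maximum number of edges in an intersecting $3$-graph whose edges have \emph{empty} common intersection (a \emph{non-trivial} intersecting family), and $\Ex^{(2)}(n;M)$ is the corresponding extremal set. So the whole statement reduces to: every non-trivial intersecting $3$-graph on $n\ge7$ vertices has at most $3n-8$ edges, with equality only for $G_1(n)$ and $G_2(n)$.

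For the lower bound I would simply check the two candidates. Both have $1+(3n-9)=3n-8$ edges: in $G_1(n)$ there are $\binom{n-1}2-\binom{n-4}2=3n-9$ triples through $v$ meeting $\{x,y,z\}$, and in $G_2(n)$ there are $3(n-3)=3n-9$ triples hitting $\{x,y,z\}$ in exactly two points, the extra edge in each case being $\{x,y,z\}$. Both are intersecting --- for $G_2(n)$ because any two $2$-subsets of $\{x,y,z\}$ meet and $\{x,y,z\}$ meets all the rest --- and both are non-trivial, as witnessed by three edges with no common vertex. This gives $\ex^{(2)}(n;M)\ge3n-8$.

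The upper bound is the real work. I would fix an edge $e_0=\{a,b,c\}$ and classify the remaining edges by $|h\cap e_0|\in\{1,2\}$, noting that only $e_0$ itself meets $e_0$ in three vertices. The edges with $|h\cap e_0|=2$ are each determined by a $2$-subset of $e_0$ and one outside vertex, so there are at most $3(n-3)=3n-9$ of them, and any two of them automatically intersect inside $e_0$. If no edge meets $e_0$ in a single vertex, this already yields $|E(H)|\le1+3(n-3)=3n-8$, with equality forcing every $2$-intersecting triple to be present, i.e. $H=G_2(n)$. The hard case is when some edge $g$ meets $e_0$ in exactly one vertex, say $e_0\cap g=\{a\}$ with $g=\{a,x,y\}$ and $a,b,c,x,y$ distinct; then every edge must meet both $e_0$ and $g$, and on splitting $H$ into edges containing $a$ and edges avoiding $a$, the latter must each contain a vertex of $\{b,c\}$ \emph{and} a vertex of $\{x,y\}$. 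A naive count of such triples is of order $n$ per pattern and far exceeds $3n-8$, so the crux --- and the main obstacle --- is to exploit the intersecting condition \emph{among the $a$-free edges} (together with the requirement that each of them meets every $a$-edge) to force these edges to funnel through a single common vertex. Balancing the number of $a$-edges against the tightly constrained $a$-free edges should pin the extremal configuration to one apex together with one exceptional edge, namely $G_1(n)$. (The only non-trivial families escaping this dichotomy are those in which no two edges meet in exactly one vertex; these lie inside a fixed $4$-set of vertices and have at most $4<3n-8$ edges, so they are irrelevant.) Tracing the equality cases through both branches then yields $\Ex^{(2)}(n;M)=\{G_1(n),G_2(n)\}$. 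An alternative route is the compression/shifting method, which preserves both the edge count and the intersecting property; its one delicate point --- that shifting may destroy non-triviality --- would then play the role of the obstacle just described.
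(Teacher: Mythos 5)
First, a point of comparison: the paper does not prove Theorem~\ref{ntif} at all --- it is quoted as a known result of Hilton and Milner \cite{HM}, with \cite{FF2} cited for a simple proof. So your proposal has to be judged as a self-contained proof, and as such it contains a genuine gap. The preliminary reduction is correct: being $M$-free is the same as being intersecting, and since $\Ex^{(1)}(n;M)=\{S_n\}$, a $3$-graph avoids all $1$-extremal hosts exactly when its edges have no common vertex, so $\ex^{(2)}(n;M)$ is the maximum size of a non-trivial intersecting $3$-graph. Your lower-bound computations for $G_1(n)$ and $G_2(n)$ are also correct, and so is the easy branch of the upper bound, where every edge meets the fixed edge $e_0$ in at least two vertices: there the count $|E(H)|\le 1+3(n-3)=3n-8$ is valid, with equality only for $G_2(n)$.

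The gap is the other branch, which you yourself label ``the crux'' and ``the main obstacle'': when some edge $g$ meets $e_0=\{a,b,c\}$ in exactly one vertex $a$, you assert that ``balancing the number of $a$-edges against the tightly constrained $a$-free edges should pin the extremal configuration'' to $G_1(n)$, but no such balancing argument is actually carried out. This case is not a technicality; it is the entire content of the Hilton--Milner theorem, and everything preceding it in your write-up is routine. Concretely, one still has to show that once an edge avoids $a$, the edges through $a$ number only $O(n)$ rather than $\binom{n-1}{2}$ (each must meet that $a$-free edge), that the $a$-free edges --- pairwise intersecting, each meeting $\{b,c\}$ and each meeting every edge through $a$ --- are forced either to be few in number or to pass through a single common vertex, and that the equality analysis then reconstructs exactly $G_1(n)$ and nothing else. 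None of this appears in the proposal. The same criticism applies to your alternative route: the ``one delicate point'' of the shifting method (that compressions can destroy non-triviality) is precisely the difficulty that the proof in \cite{FF2} is designed to overcome, so naming it does not dispose of it. As written, the proposal establishes the lower bound and one easy case of the upper bound, but not the theorem.
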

\noindent Later, we will use the fact that   $C\subset  G_i(n)\not\supset P$, $i=1,2$.

 Recently, the third order Tur\'an number for $M$ has been established by Han and Kohayakawa. Let $G_3(n)$ be the 3-graph on $n$ vertices, with distinguished vertices $x,y_1,y_2,z_1,z_2$ whose edge set consists of all edges spanned by $x,y_1,y_2,z_1,z_2$ except for $\{y_1,y_2,z_i\}$, $i=1,2$, and all edges of the form $\{x,z_i,v\}$, $i=1,2$, where $v\not\in\{x,y_1,y_2,z_1,z_2\}$.

\begin{theorem}[\cite{HK}]\label{ntntif}
	For  $n\ge 7$,  $\mathrm{ex}^{(3)}(n;M)=2n-2$ and  $\Ex^{(3)}(n;M)=\{G_3(n)\}$.
\end{theorem}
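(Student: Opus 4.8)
The statement combines a numerical value, $\ex^{(3)}(n;M)=2n-2$, with a uniqueness claim, $\Ex^{(3)}(n;M)=\{G_3(n)\}$. Since ``$M$-free'' is the same as \emph{intersecting} (every two edges meet), my plan is to separate a construction giving the lower bound from a structural classification of intersecting $3$-graphs giving the matching upper bound together with uniqueness.

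For the lower bound I would analyse $G_3(n)$ directly. A quick count gives $8+2(n-5)=2n-2$ edges, and one checks it is intersecting. The delicate point is that $G_3(n)$ must embed in none of $S_n,G_1(n),G_2(n)$, so that it is an admissible competitor for the third-order number. I would prove this through three isomorphism-invariant obstructions: in every subgraph of $S_n$ some vertex lies in all edges; in every subgraph of $G_1(n)$ some vertex lies in all but at most one edge; and in every subgraph of $G_2(n)$ some $3$-set meets each edge in at least two points. In $G_3(n)$ the most popular vertex $x$ already misses two edges, and no triple meets every edge twice (the edges $\{x,z_i,v\}$ over varying $v$ force such a triple to contain $\{x,z_1,z_2\}$, which then meets $\{x,y_1,y_2\}$ in a single point). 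Hence $G_3(n)$ violates all three, and $\ex^{(3)}(n;M)\ge 2n-2$ follows.

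For the upper bound I would argue by the cover number $\tau(H)$, which is at most $3$ for an intersecting family (any edge is a cover). The case $\tau=1$ is exactly $H\subseteq S_n$, so it is excluded. For $\tau=3$, a classical result of Frankl bounds the number of edges of an intersecting $3$-graph by an absolute constant; since this constant is smaller than $2n-2$ for every $n\ge7$, such $H$ are irrelevant. The whole content lies in $\tau=2$. Fixing a cover $\{x,y\}$ and setting $W=V(H)\setminus\{x,y\}$, I split $E(H)$ into edges meeting $W$ in a pair through $x$ only, through $y$ only, and through both; the corresponding link families $L_x,L_y\subseteq\binom{W}{2}$ together with the set $D$ of those $t$ with $\{x,y,t\}\in E(H)$ satisfy $|E(H)|=|L_x|+|L_y|+|D|$, where $|D|\le n-2$, $L_x,L_y\neq\emptyset$, and the single surviving constraint is that $L_x$ and $L_y$ are \emph{cross-intersecting}.

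The crux is then a lemma on cross-intersecting pair families, proved by matching-number analysis. If $\min(|L_x|,|L_y|)\le1$, say $L_y=\{\{r,s\}\}$, then $x$ lies in all but one edge and every $x$-edge meets $\{y,r,s\}\in E(H)$, so $H\subseteq G_1(n)$. If $L_x$ and $L_y$ are both contained in a star through one common vertex $w$, then every edge contains two of $\{x,y,w\}$ and $H\subseteq G_2(n)$. Otherwise, using that three pairwise disjoint pairs in one family would force the other to be empty (so $\nu(L_x),\nu(L_y)\le2$) and that two disjoint pairs pin the other family to the transversals across them, I would show case by case (two stars with distinct centres, a triangle link, or a family of matching number two) that $|L_x|+|L_y|\le n$; moreover this value $|L_x|+|L_y|=(n-3)+1+2=n$ is attained \emph{only} when $L_y$ is a two-pair star with centre $z$ and $L_x=\mathrm{star}(z)\cup\{u_1u_2\}$ is complete, all other configurations contributing at most a constant. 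Combined with $|D|\le n-2$ this yields $|E(H)|\le 2n-2$, with equality forcing $D$ complete and the links exactly those of $G_3(n)$, i.e.\ $H\cong G_3(n)$.

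The main obstacle is precisely this last paragraph: carrying out the cross-intersecting classification cleanly and, above all, matching each ``large'' configuration with genuine containment in $G_1(n)$ or $G_2(n)$, so that only $G_3(n)$ survives at the extremal value. I also expect some bookkeeping in verifying that the $\tau=3$ constant is strictly below $2n-2$ at the smallest admissible sizes $n=7,8$, and in checking that the boundary equality cases collapse to a single isomorphism type rather than producing a spurious second extremal family.
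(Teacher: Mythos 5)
You should know at the outset that the paper contains no proof of this statement: Theorem~\ref{ntntif} is imported wholesale from Han and Kohayakawa \cite{HK}, who prove it for general uniformity as a second-level stability result beyond Hilton--Milner. So the only possible comparison is with \cite{HK}, and your route is genuinely different: an elementary, self-contained argument special to $3$-graphs. Having checked it, I believe your outline is sound. The lower bound is right: $|G_3(n)|=8+2(n-5)=2n-2$, and your three isomorphism-invariant obstructions (a vertex in all edges for $S_n$; a vertex missing at most one edge for $G_1(n)$; a triple meeting every edge in two points for $G_2(n)$) do exclude the three forbidden containments, the last one using $n\ge7$ so that each $z_i$ lies in at least two edges $\{x,z_i,v\}$ with $v$ outside the $5$-set. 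On the upper bound, the $\tau=2$ decomposition $|E(H)|=|L_x|+|L_y|+|D|$ with $L_x,L_y$ cross-intersecting is the correct engine, and your case split is the right one: $\min(|L_x|,|L_y|)=1$ gives $H\subseteq G_1(n)$, two links inside a common star give $H\subseteq G_2(n)$, and otherwise $|L_x|+|L_y|\le n$, with equality exactly for $L_y=\{za,zb\}$ and $L_x=\mathrm{star}(z)\cup\{ab\}$, which together with $|D|=n-2$ reconstructs $G_3(n)$ up to isomorphism. What each approach buys: yours can be verified by hand and delivers uniqueness directly; \cite{HK} covers every uniformity $k$ at the cost of heavier machinery.

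Two details in your last two paragraphs need real care, and both bite precisely at $n=7$. First, your ``$\tau=3$ constant'' must come in strictly below $2n-2=12$: the crude branching bound for intersecting $3$-graphs with covering number $3$ (namely $27$) is useless here, and you need the exact theorem that such families have at most $10$ edges (attained, e.g., by the complete $3$-graph on $5$ vertices, which is intersecting, has $\tau=3$, and is contained in none of $S_n$, $G_1(n)$, $G_2(n)$, so it is a genuine competitor). Second, in the cross-intersecting lemma the lazy bound in the case $\nu(L_x)=\nu(L_y)=2$ is $4+4=8$, which exceeds $n=7$ and would wreck the final count; you must push that case down to $6$, which does hold: two disjoint pairs in each family confine both families to pairs on the same four vertices $\{a,b,c,d\}$, and then $ad\in L_x$ forbids $bc\in L_y$ and vice versa. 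Similarly, the distinct-centre star case gives at most $4$ and the triangle case at most $6$, so that for all $n\ge7$ only the $\mathrm{star}(z)\cup\{ab\}$ configuration reaches $|L_x|+|L_y|=n$. With those two points nailed down, your proof is complete.
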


\bigskip
Interestingly, the number $\binom{n-1}2$ serves as the Tur\'an number for two other 3-graphs, $C$ and $P$.
 The Tur\'an
number $\ex^{(1)}(n;C)$ has been determined in \cite{FF} for $n\ge 75$ and later for all $n$ in
\cite{CK}.
\begin{theorem}[\cite{CK}]\label{c3}
    For $n\ge 6$, $\ex^{(1)}(n;C)=\binom{n-1}{2}$. Moreover, for $n\ge8$, \newline $\mathrm{Ex}^{(1)}(n;C)=\{S_n\}$.
\end{theorem}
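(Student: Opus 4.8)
Throughout write $V = V(H)$, $n = |V|$, and for a vertex $v$ let $L_v = \{\{x,y\} : \{v,x,y\}\in E(H)\}$ denote its link graph on $V\setminus\{v\}$, so that $\deg(v) = |E(L_v)|$. The lower bound is immediate: $S_n$ is $C$-free because every edge of $S_n$ passes through the center, whereas in $C$ no vertex lies in all three edges (the three ``hinge'' vertices $a,c,e$ each lie in exactly two edges, and $b,d,f$ in just one). Hence three edges of $S_n$ always share the center and cannot realize the intersection pattern of $C$, giving $\ex^{(1)}(n;C)\ge |E(S_n)| = \binom{n-1}{2}$. For the upper bound and for uniqueness my plan is to split on the cover number $\tau(H)$, the least number of vertices meeting every edge. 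If $\tau(H)=1$ then $H\subseteq S_n$, so $|E(H)|\le\binom{n-1}{2}$ with equality iff $H=S_n$. Everything therefore reduces to the \emph{Key Lemma}: if $\tau(H)\ge 2$ and $n\ge 8$, then $|E(H)| < \binom{n-1}{2}$ (with non-strict inequality for $n\in\{6,7\}$, consistent with the theorem claiming uniqueness only from $n\ge 8$).

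To prove the Key Lemma I would fix a vertex $v$ of maximum degree $d=\deg(v)$ and use the exact decomposition $|E(H)| = d + e_0$, where $e_0$ is the number of edges avoiding $v$. The goal is then to show $d + e_0 \le \binom{n-1}{2}$, with equality only in the star. The engine is a translation of $C$-freeness into constraints coupling $L_v$ with the off-center edges: whenever $L_v$ contains two vertex-disjoint pairs $\{a,b\}$ and $\{c,d\}$, no off-center edge $f$ may meet $\{a,b\}$ in exactly one vertex, meet $\{c,d\}$ in exactly one vertex, and have its third vertex outside $\{v,a,b,c,d\}$ --- such an $f$ together with $\{v,a,b\}$ and $\{v,c,d\}$ is precisely a copy of $C$. (One must separately record the two remaining cases: copies of $C$ using exactly one edge through $v$, which restrict $e_0$ further, and copies entirely off $v$, handled by the fact that $H[V\setminus\{v\}]$ is itself $C$-free and so amenable to induction on $n$.) The qualitative consequence is a trade-off: the denser $L_v$ is, the fewer off-center edges are admissible. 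At the extreme, if $L_v$ is complete then for $n$ large enough one can always pick disjoint link pairs meeting any prospective off-center edge, forcing $e_0 = 0$ and hence $\tau(H)=1$; so $\tau(H)\ge 2$ already bars $L_v$ from being complete.

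The main obstacle is making this trade-off quantitative and tight across the whole range $n\ge 8$. I would carry out a short case analysis on the coarse structure of $L_v$ --- whether $L_v$ is a star, contains a matching of size two, or is (essentially) a triangle --- bounding $e_0$ in each case by the number of pairs left ``unblocked'' by the forbidden configurations, and then checking that $d + e_0 < \binom{n-1}{2}$ strictly whenever $\tau(H)\ge 2$. Pinning down the equality cases, and in particular explaining why genuinely non-star extremal $3$-graphs survive only at $n=6,7$, is the delicate part: there the slack in the counting vanishes, the ``$n$ large enough'' steps above fail, and one must argue directly that at $n\ge 8$ every near-extremal configuration can be completed to a copy of $C$ unless it collapses to $S_n$. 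This is exactly the regime that forced the Frankl--Füredi argument \cite{FF} to assume $n\ge 75$ and required the finer analysis of \cite{CK} to settle all $n$; I would expect to need either a careful stability-type refinement of the link analysis or the algebraic/homological input of \cite{CK} to close the small cases cleanly.
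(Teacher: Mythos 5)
This statement is not proved in the paper at all: it is imported verbatim from Cs\'ak\'any--Kahn \cite{CK} (with the earlier large-$n$ version due to Frankl--F\"uredi \cite{FF}), so the only question is whether your argument is a self-contained proof of this known result. It is not. Your lower bound is correct, the reduction to the case $\tau(H)\ge 2$ is correct, and the translation of $C$-freeness into a link condition (two disjoint pairs of $L_v$ together with an off-center edge meeting each pair in exactly one vertex and having its third vertex outside $\{v,a,b,c,d\}$ yields a copy of $C$) is accurate. But the entire content of the theorem is packed into your ``Key Lemma,'' and that lemma is never proved: the ``short case analysis on the coarse structure of $L_v$'' is only announced, the claimed conclusion $d+e_0<\binom{n-1}{2}$ is never derived for any of the cases, and your final sentence concedes that to close the small-$n$ cases you would need ``the algebraic/homological input of \cite{CK}'' --- that is, you would need to invoke the very theorem you are trying to prove. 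That is a genuine, fatal gap, not a presentational one.

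The gap is also not one that routine effort along your lines would fill. Max-degree decompositions $|E(H)|=d+e_0$ combined with greedy blocking arguments of the kind you describe naturally give bounds with linear-in-$n$ slack, or tight bounds only once $n$ is large enough to absorb the losses; this is precisely why the argument of \cite{FF} required $n\ge 75$, and eliminating that restriction is the actual substance of \cite{CK}, which proceeds by a homological method rather than by link counting. The delicacy is visible already in the statement itself: equality holders other than $S_n$ exist below $n=8$ (for instance the complete $3$-graph on five vertices plus an isolated vertex attains $\binom{5}{2}=10$ edges at $n=6$ and is not a star), so any counting scheme must be tight enough to distinguish $n=7$ from $n=8$. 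Your outline points in a sensible direction and correctly identifies where the difficulty sits, but it reproduces only the easy half of the theorem and defers the hard half to the literature.
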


 For large $n$, the Tur\'an numbers for longer (than three) loose 3-uniform paths were found in \cite{Kostochka}.
The case of length three has been omitted in \cite{Kostochka}, probably because the authors thought it had been taken care of in \cite{FJS}, where $k$-uniform loose paths were considered, $k\ge4$.
 However,  the method used in  \cite{FJS}
 did not quite work for 3-graphs. In \cite{JPR}  we fixed this omission. Given two $3$-graphs $F_1$
and $F_2$, by $F_1\cup F_2$  denote a vertex-disjoint union of $F_1$ and $F_2$. If $F_1=F_2=F$ we will sometimes write $2F$ instead of $F\cup F$.

\begin{theorem}[\cite{JPR}]\label{ex1}
 $$\ex^{(1)}(n;P)=\left\{ \begin{array}{ll}
 \binom n3 & \textrm{ and $\quad Ex^{(1)}(n;P)=\{K_n\}\qquad\quad$\;\; for $n\le6,$ }\\
20 & \textrm{ and $\quad Ex^{(1)}(n;P)=\{K_6\cup K_1\}\quad\,\,$ for $n=7,$ }\\
\binom{n-1}{2} & \textrm{ and $\quad Ex^{(1)}(n;P)=\{S_n\}\qquad\quad$\;\;\; for $n\ge 8$.}
\end{array} \right.
$$
\end{theorem}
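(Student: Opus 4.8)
I would first dispose of the lower bounds and the trivial ranges. Since $|V(P)|=7$, no $3$-graph on at most six vertices can contain $P$; hence for $n\le 6$ the complete $3$-graph $K_n$ is $P$-free and is clearly the unique $1$-extremal graph, giving $\ex^{(1)}(n;P)=\binom{n}{3}$. For $n=7$, the $3$-graph $K_6\cup K_1$ has $\binom{6}{3}=20$ edges and is $P$-free, because a copy of $P$ must occupy all seven vertices while $K_6\cup K_1$ has an isolated vertex. For $n\ge 8$, the full star $S_n$ has $\binom{n-1}{2}$ edges and is $P$-free: every edge of $S_n$ passes through the center, so any $P\subseteq S_n$ would have its center in all three of its edges, which is impossible since no vertex of $P$ lies in all three edges of $P$. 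Thus in each range the claimed value is a lower bound, and it remains to prove the matching upper bound together with uniqueness of the extremal graph.

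For $n\ge 8$ I would split according to whether $H$ is intersecting. If $H$ contains no two disjoint edges then $H$ is $M$-free, and the Erd\H{o}s--Ko--Rado theorem quoted above yields $|E(H)|\le\binom{n-1}{2}$ at once, with equality only for $S_n$ because $\Ex^{(1)}(n;M)=\{S_n\}$ for $n\ge 7$; note that here $P$-freeness is not even used. The real content lies in the opposite case, when $H$ admits a pair of disjoint edges. Here I would fix a maximum matching $\{e_1,\dots,e_\nu\}$ with $\nu\ge 2$, so that by maximality every edge of $H$ meets $V(e_1)\cup\dots\cup V(e_\nu)$. The basic local consequence of $P$-freeness is that, for any two matching edges $e_i,e_j$, any vertex $w\notin V(e_i)\cup V(e_j)$, and any $x\in e_i$, $y\in e_j$, the triple $\{x,w,y\}$ is a non-edge, since otherwise $e_i,\{x,w,y\},e_j$ is a copy of $P$. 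Iterating such forbidden transversals across all pairs of disjoint edges, and classifying the surviving triples by their intersection pattern with the matched and the unmatched vertices, the aim is to show that the total number of edges stays strictly below $\binom{n-1}{2}$, so that this case contributes no extremal graph.

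The case $n=7$ must be treated separately, since there the extremal value $20$ already exceeds the intersecting bound $\binom{6}{2}=15$. I would argue through isolated vertices. If $H$ has an isolated vertex $w$, then $H-w$ lives on $6<7$ vertices and is therefore automatically $P$-free, so $|E(H)|=|E(H-w)|\le\binom{6}{3}=20$, with equality exactly when $H-w=K_6$, that is, $H=K_6\cup K_1$; two or more isolated vertices leave at most $\binom{5}{3}=10$ edges. If instead $H$ has no isolated vertex, the goal is the sharper bound $|E(H)|\le 19$: assuming $H$ were $P$-free with at least $20$ of the $\binom{7}{3}=35$ possible edges and with all seven vertices covered, one exhibits a loose path of length three by a short direct search, a contradiction. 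Together these show $\ex^{(1)}(7;P)=20$ with unique extremal graph $K_6\cup K_1$.

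The main obstacle is plainly the non-intersecting case for $n\ge 8$. The temptation to argue that disjoint edges in one connected component immediately create $P$ fails: the $3$-graph with edges $\{1,2,3\},\{1,2,4\},\{4,5,6\}$ is connected, $P$-free, and contains the disjoint pair $\{1,2,3\},\{4,5,6\}$, the point being that two edges meeting in two vertices cannot be consecutive edges of a loose path. Hence the analysis must absorb these degenerate double-intersection patterns, and it must produce a \emph{strict} inequality uniformly in $n$, with $n=8$ as the tightest instance, so that when combined with the intersecting case the star $S_n$ emerges as the unique $1$-extremal graph. A further subtlety is that forbidding transversals relative to a single fixed disjoint pair eliminates only linearly many triples, whereas one must rule out cubically many; this is what forces the argument to use the global matching structure and the abundance of disjoint pairs rather than one pair in isolation. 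The finite, hands-on search ruling out a $20$-edge $P$-free $3$-graph on seven covered vertices is the secondary delicate point.
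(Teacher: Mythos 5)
Your constructions and the intersecting branch are sound: $K_n$ for $n\le 6$, $K_6\cup K_1$ for $n=7$, and $S_n$ for $n\ge 8$ are $P$-free for exactly the reasons you state, and when $H$ contains no two disjoint edges the Erd\H{o}s--Ko--Rado theorem together with its uniqueness part (valid for $n\ge 7$, as quoted before Theorem \ref{ntif}) settles that case completely, without even using $P$-freeness. The problem is that the entire content of the theorem lies in the two steps you explicitly leave as ``aims'': (1) for $n\ge 8$, every $P$-free $3$-graph containing a $2$-matching has strictly fewer than $\binom{n-1}{2}$ edges; (2) for $n=7$, every $P$-free $3$-graph with no isolated vertex has fewer than $20$ edges. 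For (1) you fix a maximum matching and record the forbidden transversal triples, but no counting argument follows; indeed you yourself point out why none can follow easily -- a single disjoint pair forbids only linearly many triples, and double intersections produce connected $P$-free configurations containing $M$. The truth in this branch is genuinely structural, not just a numerical bound: by Theorem \ref{ex2} the maximum-size $P$-free graphs containing $M$ are $K_6\cup K_{n-6}$ (for $8\le n\le 12$) and the comet $\co(n)$ (for $n\ge 13$), so any correct completion of your plan must in effect rediscover the conditional Tur\'an machinery $\ex(n;P|M)$, $\ex(n;\{P,C\}|M)$ of Lemmas \ref{PCM} and \ref{pcppm}, which is where \cite{JPR,JPRr} spend their main effort. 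For (2), ``a short direct search'' is not an argument: a priori there are on the order of $\binom{35}{20}$ candidate edge sets, and ruling them out by hand again requires a structural reduction (e.g., splitting on $H\supset M$ and bounding a conditional Tur\'an number on $7$ vertices); the true bound here is in fact $15$, attained by $S_7$, as recorded in Theorem \ref{ex2}.

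For calibration: the statement you were asked to prove is not proved in the present paper at all -- it is imported from \cite{JPR} -- and the proof there runs through exactly the decomposition you propose (intersecting versus $M$-containing), but its substance is the analysis of the $M$-containing case via conditional Tur\'an numbers. Your outline reproduces the skeleton of that proof and correctly diagnoses where the difficulty sits, but it omits the body, so as it stands it is a plan rather than a proof.
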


\bigskip
Interestingly, although  the ordinary Tur\'an numbers for the 2-matching $M$ and the 3-path $P$ are equal for $n\ge8$, their higher order counterparts differ significantly, being, respectively,
 of linear and quadratic order in $n$.
In \cite{JPRr} we have  completely determined the second order Tur\'an number  $\ex^{(2)}(n;P)$, together with the corresponding
2-extremal 3-graphs. \emph{A comet} $\co(n)$ is an $n$-vertex 3-graph  consisting of the complete 3-graph
$K_4$ and the full star $S_{n-3}$, sharing exactly one vertex which is the center of the
star (see Fig. \ref{FigR1}). This vertex is called  \emph{the center} of the comet, while the set of
the remaining three vertices of the $K_4$ is called its \emph{ head}.

\bigskip
\begin{figure}[!ht]
\centering
\includegraphics [width=7cm]{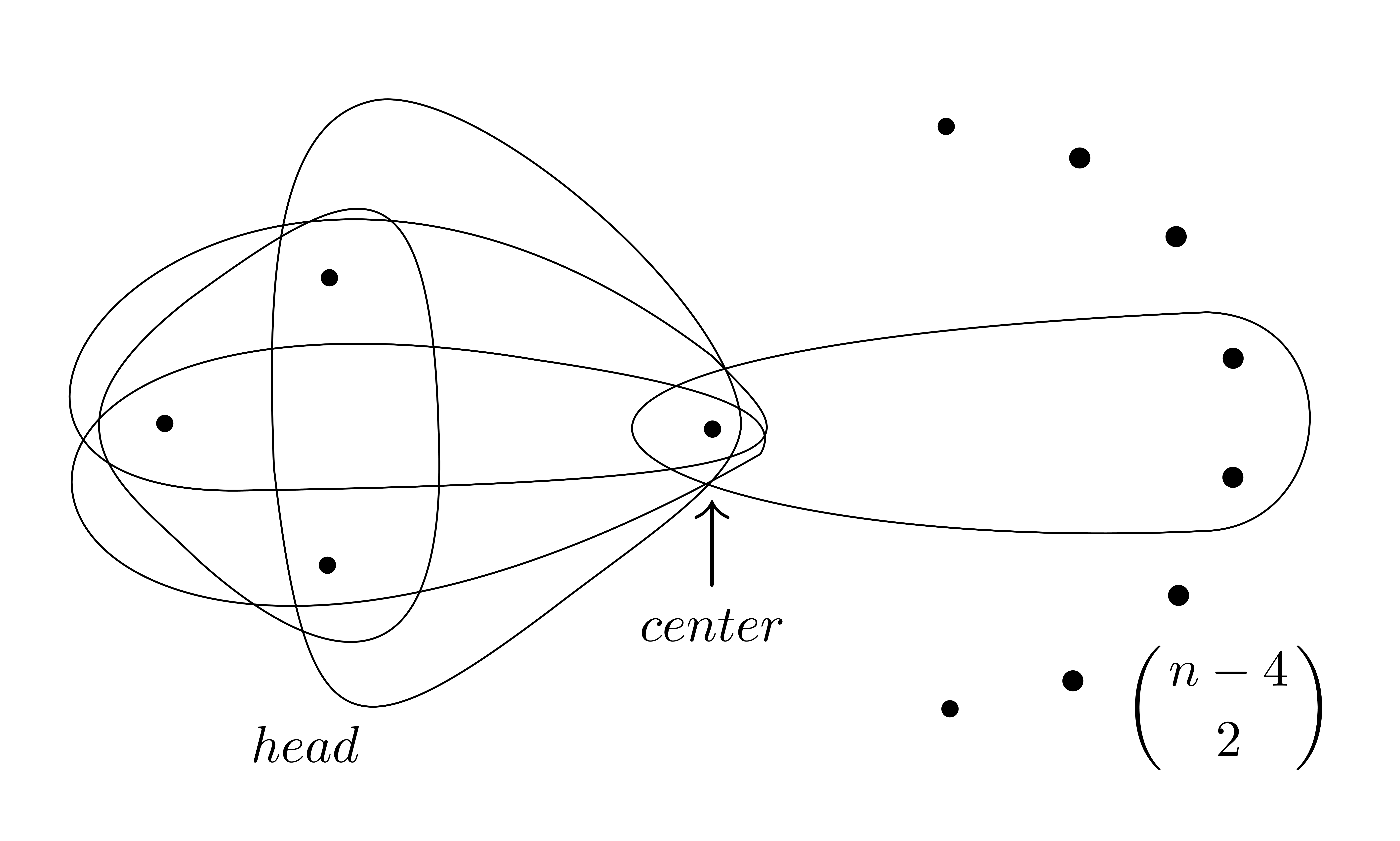}
\caption{The comet $\co(n)$} \label{FigR1}
\end{figure}

\begin{theorem}[\cite{JPRr}]\label{ex2}
    $$\ex^{(2)}(n;P)=\left\{ \begin{array}{ll}
    15 &\textrm{and} \quad  \Ex^{(2)}(n;P)=\{S_7\}\hskip 3.5cm \textrm{for $n=7$},\\
    20 + \binom {n-6}3 & \textrm{and}  \quad \Ex^{(2)}(n;P)=\{K_6 \cup K_{n-6}\}\hskip 1.05cm \textrm{for $8 \le n\le 12$},\\
    40 & \textrm{and}  \quad \Ex^{(2)}(n;P)=\{2K_6\cup K_1,\co(13)\} \hskip 0.6cm\textrm{for }n=13,\\
    4 + \binom{n-4}{2}&\textrm{and}\quad\Ex^{(2)}(n;P)=\{\co(n)\}\hskip 2.65cm\textrm{for }n\ge 14.
    \end{array} \right.
    $$
\end{theorem}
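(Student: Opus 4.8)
The plan is to first verify that each graph listed is an admissible second-order competitor: that it is $P$-free, is not isomorphic to a sub-3-graph of the relevant first-order extremal graph, and has the stated number of edges. For $8\le n\le 13$ the graphs $K_6\cup K_{n-6}$ and $2K_6\cup K_1$ are $P$-free for the trivial reason that $P$ is connected and spans $7$ vertices while every component here has at most $6$ vertices; since each has two edges with no common vertex, none embeds in the full star $S_n$. For the comet $\co(n)$ I would argue via the criterion below: its only edge avoiding the centre is the head triple $\{h_1,h_2,h_3\}$, and no edge meets that triple in exactly one vertex while also meeting a star edge in exactly one vertex, so $\co(n)$ is $P$-free; it is not a star because $\{h_1,h_2,h_3\}$ misses the centre. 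The edge counts $20+\binom{n-6}{3}$, $40$, and $4+\binom{n-4}{2}$ are then immediate. The case $n=7$ is genuinely exceptional: here the unique first-order extremal graph is $K_6\cup K_1$ rather than a star, so $S_7$ itself (with $\binom 6 2=15$ edges, all through its centre) is already an admissible second-order graph.

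\textbf{A usable criterion and the intersecting case.} The engine of the upper bound is the observation that $H\supseteq P$ \emph{if and only if} $H$ contains three edges $X,Y,Z$ with $X\cap Y=\emptyset$ and $|Z\cap X|=|Z\cap Y|=1$: the third vertex of $Z$ then automatically lies outside $X\cup Y$, so $X,Z,Y$ trace a copy of $P$. I would run the whole argument through this criterion, splitting on whether $H$ contains a pair of disjoint edges $M$. If $H$ contains no $M$ it is intersecting; for $n\ge 8$ the second-order condition forces $H$ not to be a star, so Theorem~\ref{ntif} gives $|E(H)|\le 3n-8$, which a direct comparison shows is strictly below every target value of the theorem for $n\ge8$. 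Hence this case never produces an extremal graph for $n\ge8$. For $n=7$ the star $S_7$ is permitted, and the Erd\H os--Ko--Rado bound $\ex^{(1)}(7;M)=\binom 6 2=15$ caps this case at exactly $15$, uniquely attained by $S_7$.

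\textbf{The main case: a fixed $2$-matching.} Assume $H$ contains disjoint edges $e_1,e_2$. I would classify each edge $Z$ by the pair $(|Z\cap e_1|,|Z\cap e_2|)$, which by the criterion may be anything \emph{except} $(1,1)$. The crucial amplification is that any edge disjoint from $e_1\cup e_2$ forms a fresh $2$-matching with $e_1$ and with $e_2$, and applying the criterion to these new matchings propagates the restrictions outward. I expect this to force a dichotomy: either almost every edge passes through one common vertex, leaving only a bounded dense ``head'' on a few extra vertices (the comet regime), or $H$ splits into a small number of cliques each on at most $6$ vertices (the clique-union regime, where the obstruction $K_7\supseteq P$ caps component sizes and explains the jump at $n=13$). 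Bounding each regime gives $4+\binom{n-4}{2}$ in the first and expressions of the form $20+\binom{n-6}{3}$ in the second; comparing them yields the comet for $n\ge14$, the clique union for $8\le n\le12$, and a tie at $n=13$.

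\textbf{Main obstacle.} The hard part is this last case: controlling the compounding constraints from the many overlapping $2$-matchings, converting the edge classification into the clean structural dichotomy above, and — hardest of all — extracting \emph{uniqueness} of the extremal families by tracking equality throughout every estimate. Organising the bookkeeping through conditional Tur\'an numbers, namely the maximum size of a $P$-free $H$ that is forced to contain a prescribed copy of $M$ or of $C$, seems to me the natural device for taming this analysis, and is presumably why such numbers are isolated as separate lemmas.
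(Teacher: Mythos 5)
First, a point of order: this paper does not actually prove Theorem \ref{ex2} — it is quoted from \cite{JPRr}, and only the identity $\ex^{(2)}(n;P)=\max\{\ex(n;P|M),\ex^{(2)}(n;M)\}$ hinting at the proof strategy appears here — so your proposal can only be judged on its own terms, not against an in-paper argument. On those terms, the parts you actually carry out are correct: the containment criterion ($H\supseteq P$ iff there are edges $X,Y,Z$ with $X\cap Y=\emptyset$, $|Z\cap X|=|Z\cap Y|=1$) is right; the lower-bound verifications for $S_7$, $K_6\cup K_{n-6}$, $2K_6\cup K_1$ and $\co(n)$ are sound (the comet is $P$-free for exactly the reason you give); and the intersecting case is correctly dispatched by Theorem \ref{ntif} for $n\ge 8$ and by Erd\H{o}s--Ko--Rado for $n=7$. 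Your split into the $M$-free and $M$-containing cases is also exactly the division the authors use via conditional Tur\'an numbers.

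The genuine gap is that the entire quantitative heart of the theorem — the upper bound $\ex(n;P|M)\le$ the claimed values, with uniqueness of the extremal graphs — is never proved, only ``expected'': you assert that propagating constraints from overlapping $2$-matchings forces a comet-regime/clique-union-regime dichotomy, but that assertion \emph{is} the theorem, and as literally stated it is not even true. For example, $K_6\cup S_{n-6}$ and $K_4\cup S_{n-4}$ are $P$-free, contain $M$, and fit neither regime as you describe them (they are precisely the $3$-extremal graphs of Theorem \ref{ex3_cale}, and for $n=11$ the graph $K_6\cup S_5$ has \emph{more} edges than $\co(11)$), so any correct structural argument must accommodate such hybrids and then beat them numerically — this is where the real case analysis lives (connected case: reduce to $C\subset H$ or $M\not\subset H$ and otherwise bound $\ex(n;\{P,C\}|M)$ as in Lemmas \ref{spojny} and \ref{PCM}; disconnected case: component-by-component comparison of first-order Tur\'an numbers, which is what produces the jump at $n=13$). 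You name this machinery as ``the natural device'' but never execute it, and uniqueness — tracking equality through every estimate — is likewise deferred. Separately, your $n=7$ case is incomplete: the EKR bound handles only intersecting $H$, and you must still rule out $P$-free $7$-vertex graphs that contain $M$, are not contained in $K_6\cup K_1$, and have at least $15$ edges; nothing in your sketch addresses them.
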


\noindent Note that for $n\le 6$ the second order number is not defined, since each 3-graph is a sub-3-graph
of~$K_n$. The main message behind  the above result is that for $n\ge8$ it provides an upper bound on the number of edges in  an $n$-vertex $P$-free 3-graph which is not a star.

Also in \cite{JPRr}, we managed to calculate the third order Tur\'an number for $P$ and $n=12$.
\begin{theorem}[\cite{JPRr}]\label{ex3}
    $$\ex^{(3)}(12;P)=32 \quad\text{and} \quad \Ex^{(3)}(12;P)=\{\co(12)\}.$$
\end{theorem}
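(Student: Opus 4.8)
The plan is to prove the two assertions $\ex^{(3)}(12;P)=32$ and $\Ex^{(3)}(12;P)=\{\co(12)\}$ by establishing matching lower and upper bounds, with $\co(12)$ as the conjectured unique extremal 3-graph. For the lower bound $\ex^{(3)}(12;P)\ge 32$, I would verify the three defining properties of $\co(12)$. It has $\binom{8}{2}+4=32$ edges. It is not a subgraph of $S_{12}$, because $\co(12)$ contains two disjoint edges (its head edge and any star edge) whereas $S_{12}$ is an intersecting family; and it is not a subgraph of $K_6\cup K_6$, because $\co(12)$ is connected on all $12$ vertices while every connected sub-3-graph of $K_6\cup K_6$ spans at most $6$ vertices. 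Finally, $\co(12)$ is $P$-free by a short direct check: all but one of its edges pass through the center and hence pairwise intersect, while the unique head edge meets each center-edge in $0$ or $2$ vertices, so no three edges can form the loose path $P$.

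The substance is the upper bound: every $P$-free $H$ on $12$ vertices with $H\not\subseteq S_{12}$, $H\not\subseteq K_6\cup K_6$ and $|E(H)|\ge 32$ must equal $\co(12)$. First I would dispose of the intersecting case. If $H$ contains no pair of disjoint edges, then $H$ is an intersecting family which, being not a subgraph of $S_{12}$, is not a star; by the Hilton--Milner bound (Theorem~\ref{ntif}) it then has at most $\ex^{(2)}(12;M)=3\cdot 12-8=28<32$ edges, a contradiction. Hence $H$ contains a fixed copy of $M=\{f_1,f_2\}$ with $f_1\cap f_2=\emptyset$ (and in particular $H$ is automatically not a star). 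The core task is thus to bound $P$-free $12$-vertex 3-graphs that contain $M$ but are not contained in $K_6\cup K_6$. Here I would classify every edge of $H$ by its intersection pattern with $f_1\cup f_2$, exploiting $P$-freeness locally: for example, no edge may meet each of $f_1,f_2$ in exactly one vertex while avoiding a seventh vertex, since $f_1,e,f_2$ would then be a copy of $P$. Aggregating these restrictions into a global edge count is precisely the purpose of the three conditional Tur\'an lemmas (with respect to $P$, $C$ and $M$) quoted from \cite{JPRr}, which I would invoke to pass from ``$H$ contains $M$'' to an explicit bound on $|E(H)|$.

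To separate the genuinely new extremal 3-graph $\co(12)$ from the excluded $K_6\cup K_6$, I would then split according to whether $H$ contains a triangle $C$. Since $\co(12)$ is triangle-free yet contains $M$, the triangle-free branch should yield $|E(H)|\le 32$ with equality forcing exactly the comet structure (a near-star on the leaves closed off by a single head edge), which in turn delivers the uniqueness statement $\Ex^{(3)}(12;P)=\{\co(12)\}$. In the complementary branch, where $H$ contains both $M$ and $C$, the conditional Tur\'an number with respect to $C$ governs the count, and I would argue that any such $H$ with more than $32$ edges is a subgraph of $K_6\cup K_6$ and hence excluded.

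I expect the main obstacle to be this last, dense branch: the configurations where $H$ contains both a matching and a triangle and has between $33$ and $39$ edges sit just below the second-order extremal value $40$ without being $K_6\cup K_6$, and ruling them out requires tight simultaneous control of how the remaining edges intersect both the fixed $M$ and the fixed $C$, together with a stability argument showing that any near-extremal configuration collapses onto $K_6\cup K_6$ or onto $\co(12)$. Because $n=12$ is a single value, I anticipate that part of this step reduces to a finite but delicate inspection of admissible intersection types rather than to an asymptotic estimate, and keeping that case analysis both complete and short will be the principal difficulty.
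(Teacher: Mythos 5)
Your lower bound is complete and correct, and so are two of your three upper-bound branches: the intersecting case via Theorem \ref{ntif} (giving $28<32$), and the branch where $H\supseteq M$ but $H\not\supseteq C$ via Lemma \ref{PCM}, which for $n=12\ge 11$ gives $|H|\le 4+\binom{8}{2}=32$ with equality only for $\co(12)$ and needs no connectivity hypothesis. (Note that the paper itself only quotes Theorem \ref{ex3} from \cite{JPRr}; the fair comparison is with the machinery the paper deploys for Theorem \ref{ex3_cale}, which covers every other value of $n$ by exactly this scheme.)

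The genuine gap is your last branch, $H\supseteq M$ and $H\supseteq C$, which you yourself flag as the principal difficulty and propose to settle by a ``delicate inspection'' or stability argument showing that more than $32$ edges forces $H\subseteq K_6\cup K_6$. That claim is true, but your proposed mechanism cannot work as stated: the unrestricted conditional Tur\'an number with respect to $C$ does \emph{not} ``govern the count,'' since $K_6\cup K_6$ itself contains $C$ and has $40$ edges, so that number is $40$, leaving exactly the $33$--$39$ range you worry about. The missing idea is the connectivity dichotomy, which makes this branch trivial. Lemma \ref{spojny} is a \emph{connected} conditional Tur\'an number: if $H$ is connected and contains $C$, then $|H|\le \ex_{conn}(12;P|C)=3\cdot 12-8=28<32$, full stop. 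If instead $H$ is disconnected, forget $C$ entirely and count by components: the smallest component has $m\in\{1,3,4,5\}$ vertices ($m=2$ is impossible, and $m=6$ would give $H\subseteq K_6\cup K_6$, which is excluded), and then $|H|$ is at most, respectively, $\ex^{(2)}(11;P)=30$ (an isolated vertex cannot be deleted into $S_{11}$ without contradicting $H\nsubseteq S_{12}$), $1+\ex^{(1)}(9;P)=29$, $4+\ex^{(1)}(8;P)=25$, and $\binom{5}{3}+\ex^{(1)}(7;P)=30$ --- all strictly below $32$. With this split there is no near-extremal configuration to inspect and no stability argument is needed; this is precisely how the paper's proof of Theorem \ref{ex3_cale} disposes of the corresponding cases for all other $n$.
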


The main Tur\'an-type result of this paper provides a complete formula for the third order Tur\'an number for $P$.

\begin{theorem}\label{ex3_cale}
	$$
	\mathrm{ex}^{(3)}(n;P)=
	\left\{\begin{array}{ll}
	3n-8 &\textrm{and} \quad \Ex^{(3)}(n;P)=\{G_1(n),G_2(n)\}\hskip 0.95cm\textrm{for }7\le n\le 10,\\
	25 &\textrm{and} \quad \Ex^{(3)}(n;P)=\{G_1(n),G_2(n),\co(n)\}\hskip 0.5cm\textrm{for } n= 11,\\
	32 &\textrm{and} \quad \Ex^{(3)}(n;P)=\{\co(n)\}\hskip 3cm\textrm{for }n=12,\\
	20 + \binom{n-7}{2}  &\textrm{and} \quad \Ex^{(3)}(n;P)=\{K_6\cup S_{n-6}\}\,\hskip 1.2cm\textrm{for }13\le n\le 14,\\
	4 + \binom{n-5}{2} &\textrm{and} \quad \Ex^{(3)}(n;P)=\{K_4\cup S_{n-4}\}\,\hskip 2.2cm\textrm{for }n\ge 15.
	\end{array}\right.
	$$
\end{theorem}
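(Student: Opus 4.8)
The plan is to compute $\ex^{(3)}(n;P)$ and identify $\Ex^{(3)}(n;P)$ by a combination of (a) exhibiting explicit $P$-free $3$-graphs that avoid being subgraphs of any $1$- or $2$-extremal configuration, to get lower bounds, and (b) proving matching upper bounds via the structural dichotomy already encoded in Theorems~\ref{ex1}--\ref{ex2}. The key organizing principle is the last sentence after Theorem~\ref{ex2}: for $n\ge8$, any $P$-free $3$-graph that is \emph{not} a star has at most $\ex^{(2)}(n;P)$ edges, with the $2$-extremal families listed. Since $\Ex^{(1)}(n;P)=\{S_n\}$ for $n\ge8$, a candidate $H$ for $(s+1)$-extremality for $s\ge1$ must avoid $S_n$, i.e.\ must not be a star, \emph{and} must avoid the $2$-extremal graphs; so the whole problem reduces to understanding $P$-free non-stars that additionally fail to embed into $K_6\cup K_{n-6}$ (for $8\le n\le12$), into $\co(n)$ or $2K_6\cup K_1$ (around $n=13$), etc. I would set up this ``forbidden-host'' bookkeeping carefully at the start, since the answer changes regime as $n$ crosses $10,11,12,13,14,15$, and the extremal graphs in the statement ($G_1,G_2,\co(n),K_6\cup S_{n-6},K_4\cup S_{n-4}$) are precisely the near-extremal $P$-free graphs that get promoted once the denser ones are excluded as hosts.

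\emph{Lower bounds.} For each range I would verify that the claimed extremal $3$-graph is $P$-free, has the stated edge count, and is not contained in any lower-order extremal graph. The $P$-freeness of $G_1(n),G_2(n)$ follows from the excerpt's remark that $G_i(n)\not\supset P$ (they even contain $C$); the edge counts $3n-8$ are given. For $\co(n)$ and $K_6\cup S_{n-6}$ and $K_4\cup S_{n-4}$ the counts are routine ($|\co(n)|=4+\binom{n-4}{2}$, etc.), and $P$-freeness of comets and of $K_t\cup(\text{star})$ follows because a loose path of length $3$ cannot be squeezed into a clique-plus-star without crossing the cut in a way that forces a forbidden edge. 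The delicate part of the lower bound is the non-containment clause: I must check, e.g., that $K_6\cup S_{n-6}$ (for $13\le n\le14$) is not a subgraph of $\co(n)$ or of $K_6\cup K_{n-6}$ or of the stars, which I would do by comparing vertex-degree sequences and the sizes of the ``dense part.''

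\emph{Upper bounds.} This is where the real work lies, and I expect it to be the main obstacle. Here I would invoke the \emph{conditional} Tur\'an numbers (w.r.t.\ $P$, $C$, $M$) promised from \cite{JPRr} in the next section, together with Theorems~\ref{ntif}--\ref{ntntif} on $M$: a $P$-free non-star either contains a copy of $M$ or it does not, and if it does contain $M$ one controls the link structure, while if it is $M$-free one is close to the Hilton--Milner regime governing $G_1,G_2$. Concretely, I would argue that a $3$-extremal $H$ contains a triangle $C$ or a matching $M$ (else it is too sparse), root the analysis at such a configuration, and bound the number of edges meeting versus avoiding its vertex set using the conditional lemmas; the transition at $n=12,13$ (where $\co(n)$ stops being $3$-extremal and $K_6\cup S_{n-6}$ takes over, consistent with Theorem~\ref{ex3}) is exactly where the two competing bounds cross, so I must pin down the crossover arithmetic precisely rather than asymptotically.

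\emph{The main obstacle} is the case analysis near $n\in\{11,12,13,14\}$, where three distinct extremal graphs ($G_1/G_2$, $\co(n)$, $K_6\cup S_{n-6}$) trade places and the coexistence of several extremal graphs (e.g.\ all three at $n=11$) means the upper bound must be \emph{tight simultaneously} for structurally different configurations. I would handle this by treating the small cases $n\le12$ partly by reduction to the already-known value $\ex^{(3)}(12;P)=32$ from Theorem~\ref{ex3} and by separate finite arguments, and then establishing the clean asymptotic regime $n\ge15$ (where only $K_4\cup S_{n-4}$ survives) by a uniform induction or link-counting argument that shows any $P$-free non-star avoiding the comet has at most $4+\binom{n-5}{2}$ edges.
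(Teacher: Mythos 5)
Your overall plan coincides with the paper's strategy in outline: lower bounds from the explicit candidates plus non-containment checks, and upper bounds via the conditional Tur\'an numbers for $M$ and $C$, finite analysis for small $n$, and induction for large $n$. However, there is a genuine structural gap: you never split the analysis into \emph{connected} and \emph{disconnected} $H$, and without that split the tools you invoke cannot even account for the extremal graphs themselves. For $n\in\{13,14\}$ the claimed extremal graph $K_6\cup S_{n-6}$ is disconnected and contains \emph{both} $C$ (inside the $K_6$) and $M$; consequently none of your cited results applies to it: the Hilton--Milner bound (Theorem \ref{ntif}) requires $M$-freeness, the bound $3n-8$ for graphs containing $C$ (Lemma \ref{spojny}) is $\ex_{conn}$ and requires connectivity, and Lemma \ref{PCM} requires $C$-freeness. ``Rooting at $C$ or $M$ and bounding edges meeting versus avoiding its vertex set'' is therefore powerless against disjoint unions of dense pieces. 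The paper handles exactly this by a separate disconnected case, bounding $|H|\le \ex^{(1)}(m;P)+\ex^{(1)}(n-m;P)$ (and its second/third-order refinements, using induction on $n$) according to the size $m$ of the smallest component --- and it is this component analysis, not the conditional-lemma arithmetic, that produces $K_6\cup S_{n-6}$, $K_4\cup S_{n-4}$ and the crossovers at $n=13,14,15$. Your proposal locates the crossover ``where the two competing bounds cross'' inside the conditional analysis, which is not where it occurs: in the connected case every graph ends up strictly below $|H_n|$.

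A second, quantitative gap sits inside the connected case itself. Lemma \ref{PCM} gives $\ex(n;\{P,C\}|M)=4+\binom{n-4}{2}=|\co(n)|$, which for $n\ge 13$ \emph{exceeds} the claimed value of $\ex^{(3)}(n;P)$; the conditional extremal graph is the comet, which is precisely one of the excluded hosts. So the conditional lemmas alone cannot close the upper bound: one must prove a strengthened statement for connected $\{P,C\}$-free graphs that contain $P_2\cup K_3$ and are \emph{not contained in} $\co(n)$ (the paper's Lemma \ref{con}). That statement is the bulk of the paper's proof: a decomposition of $V(H)$ into $U$, $W_0$, $W_1$ relative to a rooted copy of $P_2$, a classification of crossing edges into types $F^0,F^1,F^2$ with bounds such as $|H_1|\le 2|W_1|-3$, separate treatment of the cases $s=0$, $z=3$, $n\in\{13,14,15\}$, and finally an induction on $n$ via link counting. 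Your sketch gestures in this direction, but it does not identify that exploiting the comet exclusion (rather than mere non-star-ness) is the crux, and it supplies none of the arithmetic needed to make the bound tight simultaneously for the structurally different extremal graphs at $n=11$, $13$, and $15$.
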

\noindent In particular, for $n\ge14$, Theorem \ref{ex3_cale} gives an upper bound  on the number of edges in  an $n$-vertex $P$-free 3-graph which is neither a star nor is contained in the comet $\co(n)$.

Surprisingly, as an immediate consequence we obtain an exact formula for the 4th Tur\'an number for $P$, at least for $n\ge15$. Indeed, consider the 3-graph $\ro(n)$, called \emph{rocket}, obtained from the star $S_{n-4}$ with center $x$ by adding to it 4 more vertices, say, $a,b,c,d$, and three edges: $\{x,a,b\},  \{a,b,c\}$, $\{a,b,d\}$. Clearly, $|\ro(n)|=|K_4~\cup~S_{n-4}|~-~1$, $\ro(n)\not\subset S_n$,  $\ro(n)\not\subset \co(n)$, and   $\ro(n)\not\subset K_4\cup S_{n-4}$. Hence, $\mathrm{ex}^{(4)}(n;P) \ge\mathrm{ex}^{(3)}(n;P)~-~1$, but, in view of inequality (\ref{decrease}), the two numbers  cannot be equal. 

In a similar fashion, by choosing an appriopriate 4-extremal 3-graph, one can show that $\mathrm{ex}^{(4)}(7;P)=	\mathrm{ex}^{(3)}(7;P)-1$ and 	$\mathrm{ex}^{(4)}(14;P)=	\mathrm{ex}^{(3)}(14;P)-1$.
With some additional effort we were also  able to calculate  the remaining values of 	$\mathrm{ex}^{(4)}(n;P)$ and  determine the  families $\Ex^{(4)}(n;P)$ of 4-extremal  3-graphs.
As these numbers are, however, of no use for calculating the respective Ramsey numbers, we state  Theorem \ref{ex4} without proof.
Let $K_5^{+2}$ be the 3-graph obtained from $K_5$ by fixing two of its vertices, say $a,b$, and adding two more vertices $c,d$ and two edges $\{a,b,c\}$ and $\{a,b,d\}$.

\begin{theorem}\label{ex4}
	$$
	\mathrm{ex}^{(4)}(n;P)=
	\left\{\begin{array}{ll}
	12 &\textrm{and} \quad \Ex^{(4)}(n;P)=\{G_3(n),K_5^{+2}\}\hskip 2.6cm\textrm{for }n=7,\\
	2n-2 &\textrm{and} \quad \Ex^{(4)}(n;P)=\{G_3(n)\}\hskip 2,82cm\textrm{for }8\le n\le 9,\\
	20 &\textrm{and} \quad \Ex^{(4)}(n;P)=\{K_5\cup K_5\}\hskip 2,95cm\textrm{for }n=10,\\
	20 &\textrm{and} \quad \Ex^{(4)}(n;P)=\{G_3(n)\}\hskip 3.4cm\textrm{for }n=11,\\
	28 &\textrm{and} \quad \Ex^{(4)}(n;P)=\{G_1(n),G_2(n)\}\hskip 2.15 cm\textrm{for } n= 12,\\
	33 &\textrm{and} \quad \Ex^{(4)}(n;P)=\{K_6\cup G_1(n),K_6\cup G_2(n)\}\hskip 0.2cm\textrm{for }n=13,\\
	40 &\textrm{and} \quad \Ex^{(4)}(n;P)=\{2K_6\cup 2K_1,K_4\cup S_{10})\}\hskip 0.65cm\textrm{for }n=14,\\
	48  &\textrm{and} \quad \Ex^{(4)}(n;P)=\{\ro(n),K_6\cup S_{9}\}\,\hskip 1.74cm\textrm{for }n=15,\\
	3 + \binom{n-5}{2} &\textrm{and} \quad \Ex^{(4)}(n;P)=\{\ro(n)\}\,\hskip 3.35cm\textrm{for }n\ge 16.
	\end{array}\right.
	$$
\end{theorem}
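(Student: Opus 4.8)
The plan is to establish, for each range of $n$, a matching pair of bounds: a construction realising the stated value (the lower bound) together with a structural argument capping every candidate at that value (the upper bound). Throughout I call a $P$-free $3$-graph \emph{admissible} if it is contained in no member of $\Ex^{(1)}(n;P)\cup\Ex^{(2)}(n;P)\cup\Ex^{(3)}(n;P)$, the three families being read off from Theorems \ref{ex1}, \ref{ex2} and \ref{ex3_cale}; a $4$-extremal graph is exactly an admissible graph of maximum size. For the lower bound I would check, for each listed graph, its edge count and its $P$-freeness, and then verify admissibility by hand. The $P$-freeness has three recurring reasons: $G_3(n)$ is $M$-free and hence $P$-free; the disjoint unions $K_5\cup K_5$, $2K_6\cup 2K_1$, $K_6\cup G_i(n)$ and $K_6\cup S_9$ have every component either on fewer than seven vertices or already known (for $G_i$, from the remark after Theorem \ref{ntif}) to be $P$-free, while $P$ is connected on seven vertices; and in $\ro(n)$ and $K_5^{+2}$ the two pendant vertices attached to a common pair force every edge meeting them to overlap that pair in two points, so any would-be copy of $P$ would have to lie inside the star of a single vertex, which is impossible.

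The admissibility checks carry the real weight of the lower bound, since they are exactly where the extremal family changes character: $K_5\cup K_5$ is admissible at $n=10$ because $K_6\cup K_4\in\Ex^{(2)}(10;P)$ cannot host two disjoint $K_5$'s, yet at $n=11$ the same graph embeds into $K_6\cup K_5\in\Ex^{(2)}(11;P)$ and is disqualified, which forces the jump to $G_3(n)$; similarly $G_1(n),G_2(n)$ become admissible only once the comet has overtaken them in $\Ex^{(3)}$, which explains their reappearance at $n=12$. For the upper bound it is convenient to split off the cases $n=7$ and $n\ge14$, where the stated value equals $\ex^{(3)}(n;P)-1$. There the bound is immediate: by the strict inequality (\ref{decrease}) we have $\ex^{(4)}(n;P)\le\ex^{(3)}(n;P)-1$, and the construction gives equality. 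The remaining content in this regime is uniqueness, which I would extract from a near-extremal (stability) refinement of the proof of Theorem \ref{ex3_cale}: an admissible graph with $\ex^{(3)}(n;P)-1$ edges must sit one edge below a $3$-extremal configuration, and a short local analysis then pins it down to $\ro(n)$ for $n\ge16$ and to the two tying graphs listed for $n\in\{7,14,15\}$.

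The genuine work is the range $8\le n\le13$, where the value is strictly below $\ex^{(3)}(n;P)-1$ and (\ref{decrease}) no longer suffices. Here I would run the dichotomy underlying Theorems \ref{ex2} and \ref{ex3_cale}: an admissible $H$ either avoids $M$ or contains two disjoint edges. If $H$ is $M$-free, the Erd\H os--Ko--Rado / Hilton--Milner / Han--Kohayakawa hierarchy (Theorems \ref{ntif} and \ref{ntntif}) applies: $H$ is excluded from the full star, so if it also avoids $G_1(n),G_2(n)$ then $|E(H)|\le 2n-2$ with equality only for $G_3(n)$, while if $H\subseteq G_i(n)$ then $|E(H)|\le 3n-8$ with the extreme attained by $G_1(n),G_2(n)$ themselves; whether the latter are admissible is precisely the $n$-dependent bookkeeping noted above. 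If instead $H$ contains $M$, then $P$-freeness confines the matching edges to a bounded dense core and forces the remaining edges into a star, a structure quantified by the conditional Tur\'an lemmas for $P$, $C$ and $M$ quoted from \cite{JPRr}; counting edges in such a decomposition, while discarding every configuration that embeds into a star, the comet $\co(n)$, or one of the $K_6$-based lower-order extremal graphs, leaves only a short list of $M$-containing competitors such as $K_5\cup K_5$ and $K_6\cup G_i(n)$. Taking the larger of the two branches yields $\ex^{(4)}(n;P)$, and recording which branch wins yields $\Ex^{(4)}(n;P)$.

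The principal obstacle is not any single estimate but the coordinated bookkeeping across the thresholds $n=10,11,13,15$, at each of which the winning construction changes because a graph admissible at one value is absorbed by an enlarged lower-order extremal graph at the next. Concretely, the two stubborn points are (i) showing that in the strict-drop range $8\le n\le13$ the $M$-containing branch overtakes the $M$-free optimum only at $n=10$ and $n=13$, which requires the conditional counting to be exactly tight rather than merely asymptotic; and (ii) proving uniqueness of $\Ex^{(4)}(n;P)$, that is, that within a near-extremal admissible $H$ no edge can be redistributed without either creating a copy of $P$ or sinking $H$ into one of the already-classified families. I expect (ii) to be the hardest, since it is exactly the step for which the coarse inequality (\ref{decrease}) gives no information and for which the stability version of Theorem \ref{ex3_cale} must be invoked.
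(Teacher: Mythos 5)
The first thing to note is that the paper itself states Theorem \ref{ex4} \emph{without proof}: the only argument it records is the remark preceding the statement, namely that $\ro(n)$ is $P$-free, lies in no member of $\Ex^{(1)}(n;P)\cup\Ex^{(2)}(n;P)\cup\Ex^{(3)}(n;P)$, and has $\ex^{(3)}(n;P)-1$ edges, which together with inequality (\ref{decrease}) determines the \emph{value} (though not the extremal family) for $n\ge15$, with analogous one-edge-drop constructions mentioned for $n=7$ and $n=14$. Your proposal reproduces this argument correctly, and your verifiable groundwork is sound: the edge counts of all listed graphs match the stated values, your reasons for $P$-freeness are valid (in particular that $M$-freeness of $G_3(n)$ implies $P$-freeness, and that the pendant pair in $\ro(n)$ and $K_5^{+2}$ blocks any copy of $P$), and your two admissibility transitions are exactly right: $K_5\cup K_5$ is absorbed by $K_6\cup K_5\in\Ex^{(2)}(11;P)$ at $n=11$, and $G_1(n),G_2(n)$ become eligible at $n=12$ precisely because $\Ex^{(3)}(12;P)$ shrinks to $\{\co(12)\}$.

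Measured as a proof, however, two genuine gaps remain, and they are exactly the ``additional effort'' the authors allude to and omit. First, in the range $8\le n\le 13$ your upper bound is a strategy, not a count. The $M$-containing branch as you describe it leans on Lemma \ref{PCM}, which applies only to $\{P,C\}$-free graphs: an admissible $H\supseteq M$ may well contain the triangle $C$, and then one needs Lemma \ref{spojny}, which in turn assumes connectivity, so disconnected graphs force a full per-component analysis in the style of Section 4, redone at the fourth level. That analysis is delicate because of near-misses excluded only by containment, not by size: for instance at $n=12$ a single edge together with a vertex-disjoint full star $S_9$ is $P$-free with $1+\binom{8}{2}=29>28$ edges, and it is ruled out solely because it embeds into $\co(12)$; your phrase ``counting edges in such a decomposition'' does not yet enumerate such configurations, and the claimed exactness of the thresholds at $n=10$ and $n=13$ depends on doing so. Second, the determination of $\Ex^{(4)}(n;P)$ itself --- including at $n=7,14,15$ and $n\ge16$, where the value is immediate from (\ref{decrease}) --- rests in your write-up on an unstated ``stability refinement'' of Theorem \ref{ex3_cale}; no such refinement exists in the paper or is sketched by you, and as you yourself concede, this is the one step for which (\ref{decrease}) gives no information. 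In short: your proposal is a correct, well-organized skeleton that coincides with the only fragment of argument the paper records, but the two hard components --- the tight upper-bound counting for $8\le n\le13$ and the uniqueness of the extremal families --- are deferred rather than carried out, so as it stands it does not constitute a proof of the statement.
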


To determine Tur\'an numbers, it is sometimes useful to rely on Theorem \ref{ntif} and divide all 3-graphs into those which contain $M$ and those which do not.
To this end, it is convenient to define conditional Tur\'an numbers (see \cite{JPR, JPRr}).
     For a family of $3$-graphs $\mathcal F$, an $\mathcal F$-free $3$-graph $ G$, and an integer $n\ge |V(G)|$, the  \textit{conditional Tur\'an number} is defined as
     \begin{eqnarray*}
        \ex(n;\mathcal F|G)=\max\{|E(H)|:|V(H)|=n,\;
      \mbox{$H$ is
    $\mathcal F$-free, and } H\supseteq G \}
     \end{eqnarray*}
     Every $n$-vertex $\mathcal F$-free $3$-graph with $\ex(n;\mathcal F|G)$ edges and such
     that $H\supseteq G$  is called \emph{$G$-extremal for $\mathcal F$}.
     We denote by $\Ex(n;\mathcal F| G)$ the family of all  $n$-vertex $3$-graphs which are $ G$-extremal for $\mathcal F$.
 (If $\mathcal F=\{F\}$, we simply write $F$ instead of $\{F\}$.)

To illustrate the above mentioned technique, observe that for $n\ge7$
$$\mathrm{ex}^{(2)}(n;P)=\max\{ \ex(n; P|M), \mathrm{ex}^{(2)}(n;M)\}\overset{Thm\ref{ntif}
}{=}\max\{ \ex(n; P|M), 3n-8\}=\ex(n; P|M),$$
the last equality holding for  sufficiently large $n$ (see \cite{JPRr} for details).

In the  proof of Theorem \ref{ex3_cale} we will use the following three lemmas, all proved in \cite{JPRr}. For the first one we need one more piece of notation.
 If, in the above definition,  we restrict ourselves to connected $3$-graphs only (connected in the weakest, obvious sense) then the corresponding conditional Tur\'an number and the extremal family are denoted by
$ \ex_{conn}(n;\mathcal F|G)$ and  $\Ex_{conn}(n;\mathcal F| G)$, respectively.

\begin{lemma}[\cite{JPRr}]\label{spojny} For $n\ge7$,
    $$ \ex_{conn}(n; P|C)=3n-8\mbox{ and }\Ex_{conn}(n;P| C)=\{G_1(n),G_2(n)\}.$$
\end{lemma}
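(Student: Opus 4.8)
The lower bound is immediate: by the remark following Theorem~\ref{ntif}, each $G_i(n)$ ($i=1,2$) satisfies $C\subset G_i(n)\not\supset P$, each is connected, and $|E(G_i(n))|=3n-8$; hence $\ex_{conn}(n;P|C)\ge 3n-8$, and $G_1(n),G_2(n)$ are candidates for the extremal family. For the upper bound, fix a copy of $C$ inside a connected, $P$-free $3$-graph $H$ with $|V(H)|=n$, and write $V(C)=\{a,b,c,d,e,f\}$ with edges $\{a,b,c\},\{c,d,e\},\{e,f,a\}$. The plan is to split according to whether $H$ contains a $2$-matching $M$. If $H$ is $M$-free, then since $C\not\subseteq S_n$ (all edges of a subgraph of $S_n$ share the centre, whereas the three edges of $C$ have empty common intersection) we get $H\not\subseteq S_n$, so $H$ is not contained in the unique member $S_n$ of $\Ex^{(1)}(n;M)$. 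Theorem~\ref{ntif} then yields $|E(H)|\le \ex^{(2)}(n;M)=3n-8$, with equality only for $H\in\{G_1(n),G_2(n)\}$. This disposes of the $M$-free case wholesale, so the remaining work is to prove that an $M$-containing $H$ satisfies $|E(H)|\le 3n-9$, i.e. is not extremal.

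The engine for the $M$-containing case is the structural claim, proved from $P$-freeness and connectivity alone, that \emph{every edge of $H$ meets $V(C)$ in at least two vertices}. First, no edge $h$ meets $V(C)$ in exactly one vertex $s$: choosing a $C$-edge $e\ni s$ and a $C$-edge $e'\not\ni s$ (which exists, as $s$ lies in at most two of the three edges), one has $|e\cap e'|=1$ with $e\cap e'\neq s$ and $h\cap e'=\emptyset$, so $h,e,e'$ form a copy of $P$. Second, $H$ has no edge disjoint from $V(C)$: every pair $\{s,t\}\subseteq V(C)$ is \emph{split} by some edge of $C$ (no two vertices of $C$ lie in the same set of $C$-edges), so if a vertex lay in both an edge disjoint from $V(C)$ and an edge $\{s,t,w\}$ with $s,t\in V(C)$, these two edges together with a splitting $C$-edge would form $P$; hence no vertex lies in both kinds of edge, and the component of an edge disjoint from $V(C)$ would avoid $V(C)$ entirely, contradicting connectivity. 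The same splitting property shows that a pair $\{s,t\}$ can support an edge $\{s,t,w\}$, $w\notin V(C)$, only if $\{s,t\}$ is a transversal of the three edges of $C$; there are exactly six such \emph{admissible} pairs, namely $\{a,c\},\{c,e\},\{a,e\}$ and $\{a,d\},\{c,f\},\{b,e\}$.

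With this in hand I would record, for each $w\notin V(C)$, the set of admissible pairs $S$ with $\{S,w\}\in E(H)$, and separately the boundedly many edges contained in $V(C)$. A direct check shows that any two disjoint admissible pairs are bridged by an edge of $C$; consequently two edges $\{S,w\}$, $\{S',w'\}$ with $w\neq w'$ and $S\cap S'=\emptyset$ already form $P$, so admissible pairs used by distinct outside vertices pairwise intersect and, in particular, two such edges can never form a $2$-matching. Thus any $2$-matching in $H$ must use an edge $T$ contained in $V(C)$; but then $T$, acting as a third path-edge, bridges with the edges of $C$ to create new copies of $P$ that forbid several admissible pairs, and this additional loss cannot be compensated, so $|E(H)|\le 3n-9$ in the $M$-containing case. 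The main obstacle is exactly this last accounting: organising the finite but delicate analysis of how the internal edge $T$ (and its matching partner) interacts with the six admissible pairs and the at most $\binom{6}{3}$ internal edges, and verifying that the resulting configuration falls strictly below the Hilton--Milner optimum $3n-8$ attained only by $G_1(n),G_2(n)$; by contrast, the $M$-free case costs nothing beyond citing Theorem~\ref{ntif}.
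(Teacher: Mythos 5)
Your $M$-free half is complete and correct: since $C\subseteq H$ forces $H\nsubseteq S_n$, Theorem~\ref{ntif} immediately gives $|E(H)|\le 3n-8$ with equality only for $G_1(n)$ and $G_2(n)$, and this needs neither connectivity nor any structural analysis. Your structural claims in the $M$-containing case also check out: no edge meets $V(C)$ in exactly one vertex, connectivity excludes edges disjoint from $V(C)$, the pairs of $V(C)$ that can support an exterior edge are exactly the six transversal pairs $\{a,c\},\{c,e\},\{a,e\},\{a,d\},\{c,f\},\{b,e\}$, and any two disjoint admissible pairs are bridged by an edge of $C$, so all edges with an exterior vertex pairwise intersect and any $2$-matching must use an edge $T\subseteq V(C)$. (For reference, the paper itself does not prove this lemma at all; it imports it from \cite{JPRr}, remarking only that the proof there yields the same extremal family as Theorem~\ref{ntif}. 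So the comparison here is purely about whether your argument stands on its own.)

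It does not, because it stops exactly where the real work begins. The assertion that the interaction of $T$ with the admissible pairs causes a loss that ``cannot be compensated, so $|E(H)|\le 3n-9$'' is precisely the statement that has to be proved, and you explicitly defer it as ``the main obstacle''; that is a genuine gap, not a routine omission. Two points show it is delicate. First, the bound $3n-9$ is attained: for $n=7$ the $3$-graph $K_5^{+2}$ of Theorem~\ref{ex4} is connected, $P$-free, contains both $C$ and a $2$-matching, and has exactly $12=3\cdot7-9$ edges, so the missing accounting must be sharp. Second, for $7\le n\le 11$ the up to $\binom{6}{3}=20$ edges inside $V(C)$ are comparable to, or larger than, $3n-8$, and three interior edges can never form $P$ (which has $7$ vertices), so killing admissible pairs is not enough: one must also bound $|H[V(C)]|$ via mixed copies of $P$ built from one interior edge, one exterior edge, and a third edge. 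Moreover the effect of $T$ depends strongly on its position relative to the pair $S$ of its matching partner: for instance with $S=\{a,c\}$, the choice $T=\{d,e,f\}$ (together with the $C$-edge $\{a,b,c\}$) forbids exterior edges on all admissible pairs except $\{a,c\}$, whereas $T=\{b,d,f\}$ forbids the three pairs inside $\{a,c,e\}$ but none of $\{a,d\},\{c,f\},\{b,e\}$ directly, so the subsequent counting branches into genuinely different cases. Until that finite but substantial case analysis is carried out (including the small values $7\le n\le 11$), the upper bound $3n-8$ and the characterization of the extremal family are established only for $M$-free $H$.
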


Lemma  \ref{spojny} as stated in \cite{JPRr} does not provide family $\Ex_{conn}(n;P| C)$. However, it is clear form its proof that the $C$-extremal 3-graphs are the same as in Theorem \ref{ntif}.

\begin{lemma}[\cite{JPRr}]\label{PCM}
    $$
    \mathrm{ex}(n;\{P,C\}|M)=
    \left\{\begin{array}{ll}
    2n-4 &\qquad\qquad\qquad\qquad\qquad\qquad\qquad\;\;\;\;\textrm{for } 6\le n\le 9,\\
    20 &\qquad\qquad\qquad\qquad\qquad\qquad\qquad\qquad\;\;\textrm{for }n=10,\\
    4 + \binom{n-4}{2} &\textrm{and} \quad \Ex (n;\{P,C\}|M)=\{\co(n)\}\quad\quad\textrm{for } n\ge 11.
    \end{array}\right.
    $$
\end{lemma}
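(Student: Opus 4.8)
The plan is to prove matching lower and upper bounds, handling large $n$ by a soft reduction to $\ex^{(2)}(n;P)$ and the remaining range by a direct analysis of how edges may sit around a fixed copy of $M$.

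For the lower bound I would exhibit, in each range, an explicit $\{P,C\}$-free $3$-graph containing $M$ with the required number of edges. For $n\ge 11$ the witness is the comet $\co(n)$, and the point of its verification is that the head edge is the \emph{only} edge avoiding the center $x$, while every edge through $x$ meets the head in $0$ or $2$ vertices; hence no head edge can serve as an end-edge or a middle-edge of a loose path or triangle, so $\co(n)$ is $\{P,C\}$-free, whereas a head edge together with any star edge realizes $M$. For $n=10$ I would take $K_5\cup K_5$ (each $K_5$ is $\{P,C\}$-free, since $C$ has six and $P$ has seven vertices, and $\{P,C\}$-freeness is inherited by disjoint unions), giving $20$ edges; and for $6\le n\le 9$ one checks small explicit graphs realizing $2n-4$ edges directly (for instance $K_5\cup K_4$ when $n=9$).

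For the upper bound the easy half is $n\ge 13$. Every $M$-containing $3$-graph fails to embed in the star $S_n$, and $\Ex^{(1)}(n;P)=\{S_n\}$ for $n\ge 8$ by Theorem \ref{ex1}; hence any $\{P,C\}$-free $H\supseteq M$ is a competitor for $\ex^{(2)}(n;P)$, so that
$$\ex(n;\{P,C\}|M)\le \ex(n;P|M)\le \ex^{(2)}(n;P)=4+\binom{n-4}{2}$$
by Theorem \ref{ex2}, the last equality holding already for $n=13$ and all $n\ge 14$. As $\co(n)$ attains this value, is $\{P,C\}$-free, and is the only $C$-free graph among the $2$-extremal ones (for $n=13$ the other $2$-extremal graph $2K_6\cup K_1$ contains $C$), it is the unique $\{P,C\}$-free $M$-extremal graph in this range.

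The crux is the range $7\le n\le 12$, where this reduction is far too lossy: for $8\le n\le 12$ the graphs realizing $\ex^{(2)}(n;P)$ are of the form $K_6\cup K_{n-6}$, which contain $C$, so $C$-freeness must genuinely depress the count. Here I would argue directly. Fix a copy $M=\{e_1,e_2\}$, set $A=e_1$, $B=e_2$, $W=V\setminus(A\cup B)$, and classify every other edge $f$ by its trace $(|f\cap A|,|f\cap B|,|f\cap W|)$. The basic input is that no edge may meet both $A$ and $B$ in exactly one vertex, i.e.\ the type $(1,1,1)$ is forbidden, since such a bridge $g$ makes $e_1,g,e_2$ a copy of $P$; combining this with the further restrictions forced by $P$- and $C$-freeness, on the remaining types and on pairs of edges sharing a single vertex (which start a loose path or a triangle as soon as a third edge is present), should confine all but boundedly many edges and pin the extremal configuration to the comet for $n\in\{11,12\}$. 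In parallel, for the disconnected possibilities I would use that each component is $\{P,C\}$-free, hence has at most $\binom{m-1}{2}$ edges when it has $m\ge 6$ vertices (Theorem \ref{c3}) and at most $\binom{m}{3}$ when $m\le 5$, and then optimize the resulting sum over vertex partitions subject to $M$ being realized; this is exactly where $K_5\cup K_5$ overtakes the comet at $n=10$. I expect the genuine obstacle to be the connected analysis for small $n$: since for $n\le 9$ every admissible disconnected graph is small, the extremal examples there are connected and have no clean uniform description, so upgrading the forbidden-bridge observation to a full local description of the edges incident to $M$, and isolating the finitely many exceptional extremal graphs for $7\le n\le 10$, is where a short finite check seems unavoidable.
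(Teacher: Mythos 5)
This paper never proves Lemma \ref{PCM}: it is quoted verbatim from \cite{JPRr}, so there is no internal argument to measure your attempt against, and your proposal must stand on its own. The part you actually carry out is correct: for $n\ge 13$, since $S_n$ is intersecting we have $M\nsubseteq S_n$, so any $\{P,C\}$-free $H\supseteq M$ satisfies $H\nsubseteq S_n$, hence $|H|\le\ex^{(2)}(n;P)$ by Theorem \ref{ex1} and the definition of the second order number; Theorem \ref{ex2} then gives the value $4+\binom{n-4}{2}$ (also at $n=13$, where $40=4+\binom{9}{2}$), and uniqueness follows because the only other $2$-extremal graph, $2K_6\cup K_1$, contains $C$. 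Your verifications of $\co(n)$ and of $K_5\cup K_5$ are also fine. One caveat: this reduction inverts the logic of \cite{JPRr}, where conditional numbers such as $\ex(n;P|M)$ are the tool used to establish $\ex^{(2)}(n;P)$ in the first place (as Section \ref{turan} of this paper itself points out), so your argument is non-circular only if Theorem \ref{ex2} is accepted as a black box.

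The genuine gap is that for $6\le n\le 12$ you have a plan, not a proof, and that range is the entire content of the lemma beyond Theorem \ref{ex2}. Your only established constraint there is the exclusion of edges with trace $(1,1,1)$; nothing bounds the edges of traces $(2,1,0)$, $(2,0,1)$, $(1,0,2)$, $(0,0,3)$, etc., or their interactions, and nothing pins down the extremal graphs. In particular you do not show $\ex(11;\{P,C\}|M)=25$ and $\ex(12;\{P,C\}|M)=32$ with $\co(n)$ as the unique extremal graph --- the cases where the claimed value lies strictly below $\ex^{(2)}(n;P)=30$, resp.\ $40$, so that $C$-freeness must do real work --- nor the values $20$ at $n=10$ and $2n-4$ for $n\le 9$. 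You flag this as ``a short finite check'', but that check \emph{is} the lemma, and it is exactly the range this paper consumes: Facts \ref{o5} and \ref{UW0} invoke the lemma with $z$ as small as $8$ and with $s+5$ as small as $6$, and Section 4.1 uses the uniqueness of $\co(11)$. A smaller point: your lower bound names a witness only for $n=9$ and $n\ge10$; note that at $n=8$ the natural guess $K_5\cup K_3$ has only $11<12$ edges, so some care is needed. A uniform witness for $6\le n\le 9$ is the union of two ``books'' on disjoint pairs, i.e., all triples containing $\{a,b\}$ together with all triples containing $\{c,d\}$: this has $2(n-2)=2n-4$ edges, contains $M$, and is $\{P,C\}$-free because any two edges from the same book share two vertices, while both $P$ and $C$ require some pair of their edges lying in a common book to share at most one vertex.
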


\begin{lemma}[\cite{JPRr}]\label{pcppm}
    For $n\ge 6$
    $$\ex(n;\{P,C,P_2\cup K_3\}|M)=2n-4,$$
where $P_2$ is a pair of edges sharing one vertex.
\end{lemma}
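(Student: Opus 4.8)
The plan is to prove matching lower and upper bounds, with the upper bound split at $n=9$.

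\medskip
\noindent\emph{Lower bound (construction).} On a vertex set $V$ with $|V|=n$ fix two disjoint pairs $\{x_1,y_1\}$ and $\{x_2,y_2\}$ and let $H_0=B_1\cup B_2$, where $B_i=\{h\in\binom V3:\{x_i,y_i\}\subset h\}$ is the ``book'' on the $i$-th pair. Since no edge can contain both pairs, the $B_i$ are edge-disjoint and $|H_0|=2(n-2)=2n-4$; moreover $\{x_1,y_1,u\},\{x_2,y_2,v\}$ is a copy of $M$ for suitable $u,v$. To verify the three forbidden configurations, observe that two edges of the same book meet in two vertices, so in any loose path or triangle the edges would have to alternate between $B_1$ and $B_2$; a loose path then has its two end-edges in one book, hence intersecting (impossible for $P$), and a triangle is an odd cycle that cannot alternate, so neither $P$ nor $C$ embeds. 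Finally, any $P_2$ must use one edge of each book, so its vertex set contains both $\{x_1,y_1\}$ and $\{x_2,y_2\}$; as every edge of $H_0$ contains one of these pairs, no edge is disjoint from a $P_2$, and $H_0$ is $P_2\cup K_3$-free.

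\medskip
\noindent\emph{Upper bound setup.} Let $H$ be $\{P,C,P_2\cup K_3\}$-free with a fixed copy $M=\{e_1,e_2\}\subseteq H$, $e_1=\{a_1,a_2,a_3\}$, $e_2=\{b_1,b_2,b_3\}$, and set $T=e_1\cup e_2$, $W=V(H)\setminus T$. Two facts drive the proof. First, the \emph{cherry lemma}: every pair of edges $f,g$ sharing exactly one vertex meets both $e_1$ and $e_2$, since if such a $P_2$ were disjoint from some $e_\ell$, then $\{f,g,e_\ell\}$ would be a copy of $P_2\cup K_3$. Second, by $P$-freeness no edge $f$ has $|f\cap e_1|=|f\cap e_2|=1$: its third vertex would lie in $W$ and then $e_1,f,e_2$ would form $P$. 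For $7\le n\le9$ no further work is needed, as $\ex(n;\{P,C,P_2\cup K_3\}\,|\,M)\le\ex(n;\{P,C\}\,|\,M)=2n-4$ by Lemma \ref{PCM}, matching the construction (and for $n\le7$ the configuration $P_2\cup K_3$, on $8$ vertices, does not embed at all).

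\medskip
\noindent\emph{Upper bound for $n\ge10$.} Here the third hypothesis does genuine work, since $\ex(n;\{P,C\}\,|\,M)$ already exceeds $2n-4$ (for $n\ge11$ it equals $4+\binom{n-4}{2}$, attained by $\co(n)$, which contains $P_2\cup K_3$). I would classify edges by their intersection with $T$ and use the cherry lemma to restrict them. For instance, two edges $\{a_i,a_j,w\}$ and $\{a_i,a_k,w'\}$ with $\{a_i,a_j\},\{a_i,a_k\}\subset e_1$ and $w\ne w'\in W$ share only $a_i$ and miss $e_2$, so are forbidden; since any two pairs inside a single $e_\ell$ intersect, this forces all edges that meet $T$ in a pair of $e_\ell$ and reach into $W$ to use a single fixed pair (otherwise to be confined to a bounded vertex set). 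Together with the exclusion of $(1,1)$-edges, the triangle-freeness (needed to kill the few triangle patterns that survive inside $T$), and a short argument bounding the edges disjoint from $T$, this pins $H$ down to essentially two books on disjoint base pairs plus a bounded remainder, and a tally gives $|H|\le2n-4$.

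\medskip
\noindent The hard part is this final accounting for $n\ge10$: organizing the edge types meeting $T$ in one or two vertices so that the cherry, loose-path, and triangle constraints interlock, verifying that the two emerging books must sit on \emph{disjoint} pairs, and confirming that the edges lying inside $T$ or disjoint from $T$ never push the total past the exact value $2n-4$. This is precisely where all three forbidden configurations must be used simultaneously, and where the exact (as opposed to asymptotic) bound is delicate.
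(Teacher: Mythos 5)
Your lower bound is correct and complete: the two edge-disjoint books on disjoint pairs have $2n-4$ edges, contain $M$, your alternation argument (edges of one book meet in two vertices, so the edges of a loose path or triangle would have to alternate between the books, impossible for three edges) rules out $P$ and $C$, and the observation that any $P_2$ must cover both base pairs rules out $P_2\cup K_3$. The reduction of the range $6\le n\le 9$ to Lemma \ref{PCM} is also valid, since forbidding an additional subgraph can only decrease the conditional Tur\'an number. For the record, the paper itself does not prove this lemma at all --- it is imported from \cite{JPRr} --- so your proposal has to stand entirely on its own.

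It does not, because for $n\ge 10$ --- the only range where the hypothesis $P_2\cup K_3\nsubseteq H$ does any work, as you correctly note --- you offer a plan rather than a proof. The local constraints you state are true (the cherry lemma; the exclusion of edges meeting each $e_i$ in exactly one vertex; the fact that edges consisting of a pair of some $e_\ell$ plus a vertex of $W$ must either all use one fixed base pair or all pass through a single vertex of $W$), but the conclusion ``a tally gives $|H|\le 2n-4$'' is asserted, never derived, and that tally is the entire difficulty. Concretely, you must simultaneously control: (a) the edges inside $T=e_1\cup e_2$ --- the extremal configuration already contains $8$ of them, so the trivial bound $\binom{6}{3}=20$ wrecks the count, and one needs interlocking claims such as: once the book on $e_1$ has an edge $\{a_1,a_2,w\}$ into $W$, every edge $h$ with $|h\cap e_1|=2$, $|h\cap e_2|=1$ must contain the base pair $\{a_1,a_2\}$, because otherwise $\{a_1,a_2,w\},h,e_2$ is a copy of $P$; (b) the degenerate case where a ``book'' collapses to at most $3$ edges through one vertex of $W$; and (c) the edges lying entirely inside $W$, which your sketch never addresses --- the cherry lemma forbids two of them sharing exactly one vertex, but it permits pairwise disjoint $W$-edges and $W$-books (pairwise intersections of size two), so bounding their contribution requires a genuine trade-off argument (e.g., no $W$-edge may contain the $W$-vertex of any book edge). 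Your closing paragraph concedes that exactly this accounting is ``the hard part''; since it is missing, the upper bound in the main case, and hence the lemma, is not established.
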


\section{Proof of Theorem \ref{main}}\label{proofRam}

As mentioned in the Introduction, the inequality $R(P;r)\ge r+6$, $r\ge1$, has been already observed
in \cite{J}. We are going to show that $R(P;r)\le r+6$ for $8\le r\le 9$. Along the way, we  need to strengthen the results for  $3\le r\le 7$ as follows.
Let $K_n-e$ denote the 3-graph with $n$ vertices and $\binom n3-1$ edges, while $K_n-2e$ denote each of the three possible (up to isomorphism) 3-graphs with $n$ vertices and $\binom n3-2$ edges.
Write $H\to(P;r)$ if every $r$-coloring of the edges of $H$ yields a monochromatic copy of $P$.

\begin{lemma}\label{strength}
For every $n=9,\dots,13$, $K_n-2e\to(P;n-6)$.
\end{lemma}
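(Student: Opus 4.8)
The plan is to prove the slightly more general family of statements $K_m-2e\to(P;m-6)$ for $8\le m\le 13$ by induction on $m$. The base case $m=8$ is immediate: a $2$-coloring of $K_8-2e$ partitions its $\binom{8}{3}-2=54$ edges into two $P$-free $3$-graphs, each of size at most $\ex^{(1)}(8;P)=\binom{7}{2}=21$ by Theorem~\ref{ex1}, and $54>2\cdot 21$. So I fix $9\le n\le 13$, assume the claim for $n-1$, and suppose for contradiction that an $(n-6)$-coloring of $K_n-2e$ with classes $H_1,\dots,H_{n-6}$ has no monochromatic $P$; thus every $H_i$ is $P$-free.

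The first step is a reduction that removes one color together with one vertex. Suppose some class, say $H_c$, is a \emph{star}, i.e.\ all of its edges pass through a single vertex $v$. Deleting $v$ eliminates color $c$ entirely and leaves $K_{n-1}-je$ with $j\le 2$, carrying an $(n-7)$-coloring (at most) that still avoids a monochromatic $P$. Since this $3$-graph contains a copy of $K_{n-1}-2e$, the induction hypothesis forces a monochromatic $P$, a contradiction. Hence I may assume that \emph{no} class is a star, and then Theorem~\ref{ex2} bounds each class by the second-order number, $|H_i|\le\ex^{(2)}(n;P)$.

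For most values a single inequality now suffices. Summing sizes,
\[
\binom{n}{3}-2=\sum_i |H_i|\le (n-6)\,\ex^{(2)}(n;P),
\]
and, reading $\ex^{(2)}(n;P)$ off Theorem~\ref{ex2}, the right side is strictly smaller than the left for $n\in\{9,10,11,13\}$ (e.g.\ $284>7\cdot 40$ at $n=13$) — the desired contradiction. The one exceptional value is $n=12$, where $6\cdot 40=240>218=\binom{12}{3}-2$, and this is where the real work lies. Here I would invoke the new third-order bound $\ex^{(3)}(12;P)=32$ (Theorem~\ref{ex3_cale}) together with $\Ex^{(2)}(12;P)=\{K_6\cup K_6\}$: any class with more than $32$ edges, being $P$-free and not a star, must lie inside a copy of $K_6\cup K_6$. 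Writing $t$ for the number of such \emph{large} classes, the bound $40t+32(6-t)\ge 218$ forces $t\ge 4$; moreover no two large classes can use the same balanced $6/6$ bipartition, since their disjoint sizes would then sum to more than $\binom{6}{3}+\binom{6}{3}=40$. Thus one extracts at least four distinct bipartitions, each large class occupying only the triples that do \emph{not} cross its bipartition.

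The crux, which I expect to be the hardest part, is then purely combinatorial: one must show that any $t$ balanced $6/6$ bipartitions of the twelve vertices leave at least $67$, $35$, or $3$ triples crossing \emph{every} one of them (for $t=4,5,6$ respectively), because such triples can receive only one of the $6-t$ small colors (each of size at most $32$) or be among the two absent edges, so at most $32(6-t)+2$ of them are available. A crude union bound is far too weak for this; the argument must exploit the rigid structure of the pairwise common refinements, whose four cells are forced into the pattern $(p,q,q,p)$ with $p+q=6$, through a short case analysis over the few admissible intersection patterns. Establishing this packing/covering estimate is the only genuinely delicate point; everything else is the induction and the two elementary counts above.
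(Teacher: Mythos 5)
Your treatment of $n\in\{9,10,11,13\}$ (and the base case $n=8$) is correct and is essentially the paper's argument in contrapositive form: the paper pigeonholes one color class with more than $\ex^{(2)}(n;P)$ edges, concludes it must be a star, deletes its center and descends; you delete a star class if one exists and otherwise sum the second-order bounds. The arithmetic ($3\cdot21<82$, $4\cdot24<118$, $5\cdot30<163$, $7\cdot40<284$, and the failure $6\cdot 40>218$ at $n=12$) checks out, as does your reduction of the $n=12$ case: there are $t\ge4$ classes of size exceeding $32$, each must lie in a copy of $K_6\cup K_6$ (Theorems \ref{ex2} and \ref{ex3}), no two of them in the same copy, and any triple crossing all $t$ of the associated bipartitions can only receive one of the $6-t$ small colors or be a missing edge; so a contradiction follows once one shows that at least $32(6-t)+3$ triples cross all $t$ bipartitions.

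But that covering claim is exactly where your proof stops, and it is the entire content of the hard case; asserting that it follows from ``a short case analysis over the few admissible intersection patterns'' is not a proof. Moreover, the difficulty sits in a different place than your sketch suggests. For $t=4,5$ a mild refinement of the union bound already suffices: two distinct balanced bipartitions have cell pattern $(p,6-p,6-p,p)$, so their non-crossing families overlap in $2\left[\binom{p}{3}+\binom{6-p}{3}\right]\ge 4$ triples, whence $|N_1\cup\dots\cup N_t|\le 40+36(t-1)$, giving $220-148=72\ge 67$ and $220-184=36\ge 35$. For $t=6$, however, the same bound gives $220-220=0$, far short of the needed $3$, and one genuinely has to work: for instance, show by a Bonferroni-type count (using $\binom{m}{2}\le 3(m-1)$ for multiplicities $m\le 6$ together with all $\binom{6}{2}=15$ pairwise overlaps being $\ge4$) that a union of size $\ge 218$ forces at least two triples lying inside a side of every one of the six bipartitions, and then eliminate that configuration by a case analysis on how those two triples intersect. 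None of this appears in your write-up. It is also worth noting that the paper does not prove such a statement from scratch either: it fixes one large class, passes to the $6$ by $6$ crossing 3-graph $H$, shows every remaining color has at most $36$ edges there, extracts by averaging three classes with exactly $36$ edges in $H$, and then cites \cite{JPRr} for the impossibility of three disjoint such subgraphs. Your route is viable (the claim you need is true), but as written the case $n=12$ --- and with it the lemma --- is unproved.
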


\proof
 A coloring which does not yield a monochromatic copy of $P$ is referred to as \emph{proper}. Below, for each $n$ we assume that there is a proper coloring of $K_{n}-2e$ and arrive at a contradiction.

\medskip

\noindent{\bf$\mathbf{n=9}$:} For every 3-coloring of $K_9-2e$ there is a color with at least 28 edges and thus, if it is proper, then, by Theorem \ref{ex1}, that color must form  a full star. After removing the center of that star, we obtain a proper 2-coloring of $K_8-2e$, which, again by Theorem \ref{ex1}, contains a monochromatic copy of $P$, a contradiction.

\medskip

\noindent{\bf$\mathbf{n=10}$:} For every 4-coloring of $K_{10}-2e$ there is a color with at least 30 edges and thus, if it is proper, then, by Theorems \ref{ex1} and  \ref{ex2}, that color must form  a  star.  After removing the center of that star we are back to the $n=9$ case.

\medskip

\noindent{\bf$\mathbf{n=11}$:} For every 5-coloring of $K_{11}-2e$ there is a color with at least 33 edges and thus, if it is proper, then, again by Theorems \ref{ex1} and  \ref{ex2}, that color must form  a  star.
 After removing the center of that star we are back to the $n=10$ case.

\medskip

We leave the most difficult case of $n=12$ to the end of the proof.

\medskip

\noindent{\bf$\mathbf{n=13}$:} For every 7-coloring of $K_{13}-2e$ there is a color with at least 41 edges and thus, if it is proper, then, one more time by Theorems \ref{ex1} and  \ref{ex2}, that color must form  a  star.
 After removing the center of that star we jump to the $n=12$ case.

\medskip

\noindent{\bf$\mathbf{n=12}$:} Consider a 6-coloring of $K_{12}-2e$. If a color  forms a star,  then, after removing its center, we  obtain a 5-coloring of  $K_{11}-2e$ (or  $K_{11}-e$ or $K_{11}$) which, as we have already proved, contains  o monochromatic copy of $P$. Thus, from now on we assume no color class forms a star.
However, there is a color with at least 37 edges and thus, if it is proper, then, this time by Theorems \ref{ex1}-\ref{ex3}, that color must be a sub-3-graph of $K_6\cup K_6$. After removing that copy of $K_6\cup K_6$, we are looking at a proper 5-coloring of a complete,  6 by 6  bipartite 3-graph $H$, with possibly up to  two edges missing. As $|H|\ge 220-40-2=178$, the average number of edges per color is at least 35.6. On the other hand no color may have been applied to more than 36 edges. The reason is that, as explained above, such a color class would need to be a sub-3-graph of a copy of $K_6\cup K_6$, but it is easy to check that every copy of $K_6\cup K_6$ shares at most 36 edges with $H$. This implies that among the five color classes at least three have size 36. But it was shown already in \cite{JPRr} (proof of Theorem 1, case $r=6$) that the coexistence of three disjoint sub-3-graphs in $H$, each having 36 edges and  contained in a copy of $K_6\cup K_6$, is impossible. \qed

\bigskip

\noindent{\bf{\textit{Proof of Theorem \ref{main}}}:}
In the case $r=8$ we are going to prove a little stronger result to be used in the case $r=9$.

\medskip

\noindent{\bf$\mathbf{r=8}$:} For every 8-coloring of  $K_{14}-e$ there is a color with at least 46 edges,  and thus, if it is proper, then,  by Theorems \ref{ex1}, \ref{ex2}, and \ref{ex3_cale},  that color must either form  a  star or be a sub-3-graph of the comet $\co(14)$. In either case, we remove the center of the structure in question, a star or a  comet, and in addition, if it is the comet, the edge spanned by its head. We get  at a 7-coloring of $K_{13}-e$ or $K_{13}-2e$ which, by Lemma \ref{strength}, yields a monochromatic copy of $P$, a contradiction.
 It follows that $K_{14}-e\to (P;8)$, and so,  $K_{14}\to (P;8)$ too. Hence, we have proved Theorem \ref{main} for $r=8$.

 \medskip

\noindent{\bf$\mathbf{r=9}$:}
For every 9-coloring of  $K_{15}$ there is a color with at least 51 edges, and thus, if it is proper, then, again by Theorems \ref{ex1}, \ref{ex2}, and  \ref{ex3_cale}, that color   must form a sub-3-graph
      of $S_{15}$  or $\co(15)$. Similarly to the case $r=8$, we remove a vertex and possibly an edge, to obtain an 8-coloring of $K_{14}$ or $K_{14}-e$, which, by the case $r=8$ yields a monochromatic copy of $P$, a contradiction.
\qed

\section{Proof of Theorems \ref{ex3_cale}}

  For $n=12$, Theorem \ref{ex3_cale} has been already proved in \cite{JPRr} (cf. Theorem \ref{ex3} in Introduction). Therefore, it seems natural  to divide the  proof into two ranges of $n$: smaller than 12 and larger than 12. The general set-up is, however, the same for both. We first check that all candidates for being 3-extremal 3-graphs do  qualify, that is, are $P$-free, are not contained in any of the 1-extremal or 2-extremal 3-graphs with the same number of vertices, and have the  number of edges given by the formula to be proved. Then, we  consider an arbitrary $n$-vertex, qualifying 3-graph $H$ and show that unless it is one of the candidate 3-extremal  3-graphs itself, its number of edges is strictly smaller than theirs.

For the latter task, we distinguish  two cases: when $H$ is connected and disconnected. The entire proof is inductive, in the sense that here and there we  apply the very Theorem~\ref{ex3_cale}
for smaller instances of $n$, once they have been confirmed.

\subsection{$\mathbf{7\le n \le 11}$} First note that by Theorems \ref{ex1} and \ref{ex2}, for $7 \le n \le 11$
$$
\Ex^{(1)}(n;P) \cup \Ex^{(2)}(n;P) =\{S_{n}, K_6\cup K_{n-6}\}.
$$
Moreover, for $i\in\{1,2\}$, $G_i(n)\nsubseteq S_{n}$ and $G_i(n)\nsubseteq K_6\cup K_{n-6}$. Consequently, since $G_i(n)$ is $P$-free,
$$
\ex^{(3)}(n;P)\ge |G_i(n)|=3n-8.
$$
We are going to show that, in fact, the 3-graphs $G_i(n)$, $i\in \{1,2\}$ are the only 3-extremal 3-graphs for $n\le 10$, whereas, for $n=11$, in addition, the comet $\co(11)\in \Ex^{(3)}(11;P)$.

For $7\le n \le 11$, let $H$ be an $n$-vertex $P$-free 3-graph, satisfying
\begin{equation}\label{nsub}
H\nsubseteq S_{n}\qquad\mbox{and}\qquad  H\nsubseteq K_6\cup K_{n-6}.
\end{equation}
We first assume that $H$ is \emph{connected}. If $H\supset C$ or $H\not\supset M$ then, by, respectively, Theorem \ref{ntif} or Lemma \ref{spojny},
$$
|H|\le 3n-8,
$$
with the equality for $H=G_i(n)$, $i\in \{1,2\}$ only. Otherwise,
$$|H|\le \ex(n;\{P,C\}|M)\le 3n-8,$$
where the second inequality holds by Lemma \ref{PCM}, and it becomes equality only if $n=11$ and $H=\co(11)$.

Next, we show  that if a $P$-free 3-graph $H$ satisfying (\ref{nsub}) is \emph{disconnected}, then $|H|< 3n-8$.  Let $m=m(H)$ be the number of vertices in the smallest componet of $H$. Since $n\le11$, we have $m\le5$. Moreover, $m\neq 2$, since no component of a 3-graph may have two vertices. Thus,  $m\in\{1,3,4,5\}$.
Note also that, as a consequence of the second part of  (\ref{nsub}),  no union of components of $H$ may have  6 vertices together. Consequently, $m\neq n-6$. We now break the proof into several cases.

If $n=7$,  we must have $m=3$ and thus,
$$|H|\le1+4<3n-8=13.$$
 For $n\ge8$, if $m=1$, that is, if there is an isolated vertex $v$ in $H$, then
$H-v$ still satisfies (\ref{nsub}) with $n-1$ instead of $n$, and, by induction,
$$|H|\le \ex^{(3)}(n-1;P)\le 3(n-1)-8<3n-8.$$
 If $m=3$, then, for $n=8,10,11$,we apply the bound
$$|H|\le 1+ \ex^{(1)}(n-3;P)< 3n-8,$$
where the  last inequality follows by Theorem \ref{ex1}. If $m=4$, we have, similarly,
$$|H|\le 4+ \ex^{(1)}(n-4;P)< 3n-8,$$
for $n=8,9,11$. Finally, if $m=5$ (and $n=10$), $|H|\le 2\times\binom 53<22$.

\subsection{$\mathbf{n\ge 13}$}
By Theorems \ref{ex1} and \ref{ex2},
$$
\Ex^{(1)}(13;P) \cup \Ex^{(2)}(13;P) =\{S_{13}, \co(13),K_6\cup K_6\cup K_1\},
$$
while for $n\ge 14$,
$$
\Ex^{(1)}(n;P) \cup \Ex^{(2)}(n;P) =\{S_n, \co(n)\},
$$
Therefore, to determine $\ex^{(3)}(n;P)$ for $n\ge 13$ we have to find the largest number of edges in an $n$-vertex  $P$-free 3-graph $H$ such that $H\nsubseteq S_{n}$, $H \nsubseteq \co(n)$ and for $n=13$, in addition, $H\nsubseteq K_6\cup K_6\cup K_1$.
The 3-graphs
$$H_{n}:=K_6\cup S_{n-6}\quad \mbox{for}\quad n\in\{13,14\}\qquad\mbox{ and}\qquad H_n:=K_4\cup S_{n-4}\quad\mbox{ for}\quad n\ge 15$$ satisfy all the above conditions.
Hence, for $n\ge13$,
$$
\ex^{(3)}(n;P)\ge |H_n|.
$$
We are going to show that also the opposite inequality holds, as well as, that the equality holds  for $H_n$  only.

\bigskip
 To this end, let $H\neq H_n$ be an $n$-vertex
$P$-free 3-graph such that

\begin{equation}\label{nsub1}
H\nsubseteq S_{n},\qquad\mbox{and}\qquad  H\nsubseteq \co(n),
\end{equation}
 and for $n=13$, in addition, $H\nsubseteq K_6\cup K_6\cup K_1$. We will show that $|H|< |H_n|$.

Assume first that $H$ is \emph{ connected}.  If, in addition, $H\supset C$ or $H\not\supset M$, then,   by, respectively, Lemma \ref{spojny} or Theorem \ref{ntif},
$$
|H|\le  3n-8  < |H_n|.
$$
Otherwise, $H$ is a $\{P,C\}$-free, connected 3-graph containing $M$.
Since, by Lemma \ref{pcppm},
$$
\ex(\{n;P,C,P_2\cup K_3\}|M)=2n-4 < |H_n|,
$$
we may assume that $ P_2\cup K_3\subset H$.
Thus, the connected case will be completed once we have proved the following lemma.

\begin{lemma}\label{con}
If $H$ is a connected, $n$-vertex, $n\ge13$, $\{P,C\}$-free 3-graph such that  $H\supset P_2\cup K_3$,  and $H \nsubseteq \co(n)$, then $|H|<|H_n|$.
\end{lemma}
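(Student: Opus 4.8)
The plan is to designate a \emph{center} and split $H$ into a large ``star part'' through it and a small ``head'' region, then count. Fix a copy of $P_2\cup K_3$ in $H$ and let $x$ be the apex of the $P_2$; this is the natural candidate for the center, since in $\co(n)$ the apex of any $P_2$ is forced to be the center. Write $D$ for the set of edges of $H$ avoiding $x$, let $R=\bigcup_{g\in D}g$ be the set of vertices covered by these edges (the \emph{head region}, which contains the copy of $K_3$), put $r=|R|$, and call the remaining $n-1-r$ vertices \emph{leaves}. Every edge of $H$ is then of one of three types: an edge inside $\{x\}\cup R$, a \emph{leaf-edge} $\{x,\ell,\ell'\}$ with $\ell,\ell'$ leaves, or a \emph{cross edge} $\{x,h,\ell\}$ with $h\in R$ and $\ell$ a leaf. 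Denoting by $\phi(r)$ the number of edges inside $\{x\}\cup R$, the whole task reduces, once cross edges are ruled out, to the bound
\[
|H|\le \binom{n-1-r}{2}+\phi(r).
\]

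The structural heart, carried out under the assumption that at least one leaf-edge exists, is a short sequence of $P$-exclusions. First, a cross edge $\{x,h,\ell\}$ together with a non-$x$ edge $g\ni h$ and any leaf-edge disjoint from $g$ completes to a copy of $P$; hence there are no cross edges. The same device rules out the \emph{dangerous pattern}: a non-$x$ edge $g$ and an $x$-edge meeting $g$ in exactly one vertex, the other vertex of which lies outside $g$, again form (with a leaf-edge) a copy of $P$. From the absence of this pattern, together with connectivity, I expect to deduce that the non-$x$ edges are pairwise vertex-disjoint triples, each completing to a $K_4$ with $x$; thus $r=3m$ for some $m\ge 1$, and $\phi(3)=4$, $\phi(4)=3$, and $\phi(r)\le\tfrac{4}{3}r$ in general. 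This decomposition is the main obstacle: excluding overlapping heads and the values $r=4,5$ is a finite but delicate check on the few edges of $\{x\}\cup R$, and it is precisely \emph{connectivity} (the center must reach $R$, yet every $x$-edge into a $4$-vertex head is dangerous) that forces $\phi(4)=3$ rather than $4$, the extremal configuration being the rocket $\ro(n)$.

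With the decomposition in hand the count is immediate. Set $\psi(r)=\binom{n-1-r}{2}+\phi(r)$. Then $\psi(3)=\binom{n-4}{2}+4=|\co(n)|$, and the case $r=3$ confines all edges to $\{x\}\cup R\subseteq K_4$ together with a star at $x$, i.e. $H\subseteq\co(n)$, which is excluded by hypothesis; hence $r\ge 4$. Since $\psi$ is strictly decreasing for $r\ge 3$ in the relevant range (one more head costs $\sim n$ leaf-edges and gains at most $4$) and $\phi(4)=3$, I obtain for $n\ge 15$
\[
|H|\le \psi(4)=\binom{n-5}{2}+3=|H_n|-1<|H_n|.
\]
For $n\in\{13,14\}$ the same monotonicity, checked directly against $|H_n|=20+\binom{n-7}{2}$ (here $\psi(4)=\binom{n-5}{2}+3$ equals $31$ and $39$, below $35$ and $41$), gives $\psi(r)<|H_n|$ for all $r\ge 4$.

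It remains to treat the degenerate case in which no leaf-edge exists, where the exclusions above are unavailable. Here every non-central vertex lies in $R$ or is joined to $x$ only through $R$, and a variant of the same path argument (using two $x$-edges meeting $R$ in place of a leaf-edge) shows that once $|R|\ge 4$ the head region cannot be richly attached to $x$ without creating $P$; consequently $H$ splits into a bounded number of small heads through $x$, giving $|H|=O(n)$, far below $|H_n|$ for $n\ge 13$. Combining the two cases completes the proof.
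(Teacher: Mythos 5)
Your overall strategy (isolate a center $x$, classify edges by how they meet the union $R$ of the non-$x$ edges, and count) is genuinely different from the paper's proof, but it has a fatal gap exactly at its structural heart. The claim that the non-$x$ edges ``are pairwise vertex-disjoint triples, each completing to a $K_4$ with $x$, thus $r=3m$'' is false, and your own text refutes it: the rocket $\ro(n)$, which you correctly identify as the extremal configuration, has two non-$x$ edges $\{a,b,c\},\{a,b,d\}$ sharing \emph{two} vertices and completing to no $K_4$, so $r=4\neq 3m$. Other legitimate configurations also escape your dichotomy: a ``book'' $\{a,b,c_1\},\dots,\{a,b,c_k\}$ plus $\{x,a,b\}$, two edges sharing exactly one vertex such as $\{a,b,c\},\{c,d,e\}$ together with $\{x,a,b\},\{x,d,e\}$, sunflowers with core a non-central vertex, etc. What your count actually needs is a proof that $\phi(3)\le 4$, $\phi(4)\le 3$, and $\phi(r)\le\tfrac43 r$ over \emph{all} such configurations --- and this is precisely what you defer with ``I expect to deduce'' and ``a finite but delicate check.'' It is not a detail one can wave at: with only the generic bound $\phi(4)\le\lfloor 16/3\rfloor=5$, the case $r=4$ gives $|H|\le\binom{n-5}{2}+5>|H_n|$, so the theorem fails at exactly the value of $r$ where the rocket lives. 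Even the preliminary exclusions are porous: killing a cross edge $\{x,h,\ell\}$ requires a leaf-edge avoiding $\ell$, which your hypothesis (``at least one leaf-edge exists'') does not guarantee.

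The second gap is the degenerate case with no leaf-edge, which is dispatched by assertion (``a variant of the same path argument shows\dots giving $|H|=O(n)$''). Without leaf-edges the cross-edge exclusion is unavailable, $R$ need not be small, and no bound on the number of cross edges is established; nothing resembling a proof is given. For comparison, the paper avoids any classification of head structures: it partitions $V(H)$ by $U=V(P_2)$ and by the degrees in $H[W]$ (into $W_0$ and $W_1$), bounds the pieces $|H[W]|+|H_1|$ and $|H[U\cup W_0]|$ using pre-established conditional Tur\'an lemmas (Lemma \ref{spojny}, Lemma \ref{PCM}, Lemma \ref{pcppm}, Theorem \ref{ntif}) together with a nonseparable-pairs argument, and closes with an induction on $n$ that deletes a vertex $v\in W_0$ whose link has at most $n-6$ edges. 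If you want to salvage your route, the $\{P,C\}$-freeness plus ``no dangerous pattern'' condition (every pair in an $x$-edge must be contained in, or disjoint from, every non-$x$ edge) is the right lever --- it is the same nonseparability notion the paper uses --- but you must actually carry out the classification of $\{x\}\cup R$ and handle the no-leaf-edge case before the counting step means anything.
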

\noindent (Note that  for $n=13$, the  additional requirement  that $H\nsubseteq K_6\cup K_6\cup K_1$ is, due to connectedness, fulfilled automatically.)  We devote an entire  section to prove Lemma \ref{con}. Meanwhile, taking Lemma \ref{con} for granted, let us quickly complete the proof of Theorem~\ref{ex3_cale}.

Assume that $H$ is \emph{disconnected} and, as before, let $m=m(H)$ be the order of the smallest component of $H$,  $1\le m\le n-m$, $m\neq2$. Below, to bound $|H|$, we use the Tur\'an numbers for $P$ of the 1st, 2nd, and 3rd order and utilize, respectively, Theorems \ref{ex1}, \ref{ex2}, and \ref{ex3_cale} (per induction).

If $v$ is an isolated vertex ($\mathbf{m=1}$), then, similarly as for small $n$,  $H-v$ satisfies (\ref{nsub1}), because otherwise $H$ would not.
 Hence, for $n\ge 15,$
$$
|H|\le \ex^{(3)}(n-1;P)<|H_n|.
$$
For $n=14$, we cannot guarantee that $H-v\not\subset K_6\cup K_6\cup K_1$, so we use the second order Tur\'an number instead which still does  the job:
$$
|H|\le \ex^{(2)}(13;P)=40<41=|H_{14}|.
$$
To complete the case $m=1$ notice that for $n=13$, since $H\nsubseteq K_6\cup K_6\cup K_1$, we have $H-v\not\subset K_6\cup K_6$, and we are in position to use  induction again. Hence,
$$
|H|\le \ex^{(3)}(12;P)=32<35=|H_{13}|.
$$

 For $m\ge3$, let us express $H$ as a vertex disjoint union of two 3-graphs:
$$
H=H'\cup H'', \qquad |V(H')| = m, \quad |V(H'')|=n-m
$$
Then, clearly, both $H'$ and $H''$ are $P$-free, and thus
\begin{equation}\label{ml}
|H|\le \ex^{(1)}(m;P)+\ex^{(1)}(n-m;P).
\end{equation}

For $\mathbf{m=3}$, since  $H\nsubseteq \co(n)$, we have $H''\nsubseteq S_{n-3}$ and consequently
$$
|H|\le 1+ \ex^{(2)}(n-3;P)<|H_n|,
$$
where the last inequality is easily verified by hand.

For $\mathbf{m=4}$ and $n\in \{13,14\}$ by (\ref{ml}),
$$
|H|\le 4 + \ex^{(1)}(n-4;P)= 4+ \binom{n-5}{2}< 20 + \binom{n-7}{2}=|H_n|;
$$
and for $n\ge 15$, either  $H''\subseteq S_{n-4}$ and so $H\subseteq K_4\cup S_{n-4}=H_n$ (in which case we are done) or  $H''\nsubseteq S_{n-4}$ but then,
$$
|H|\le 4 + \ex^{(2)}(n-4;P)<4+ \binom{n-5}{2}=|H_n|.
$$

For $\mathbf{m=5}$ by (\ref{ml}),
$$
|H|\le  \ex^{(1)}(5;P)+ \ex^{(1)}(n-5;P)= 10+ \binom{n-6}{2}< |H_n|.
$$

For $\mathbf{m=6}$ and $n=13$, since $H\nsubseteq K_6\cup K_6\cup K_1$ we have $H''\not\subset K_6\cup K_1$ and so, $|H''|\le \ex^{(2)}(7;P)=|S_7|$ whereas for $n\ge 14$, we bound
 $|H''|\le \ex^{(1)}(n-6;P)=|S_{n-6}|$.  Hence, in both cases we have
$$
|H|\le  \ex^{(1)}(6;P)+ |S_{n-6}|= 20+ \binom{n-7}{2}\le |H_n|.$$
However, for $n\in\{13,14\}$, the first inequality must be strict (since $H\neq H_n$), while for $n\ge15$ the second inequality is strict.

For $\mathbf{m=7}$ we have $n\ge 14$  and, by (\ref{ml}), for $n=14$
$$
|H|\le  \ex^{(1)}(7;P)+ \ex^{(1)}(7;P)= 20+ 20<41= |H_{14}|,
$$
whereas, for $n\ge 15$
$$
|H|\le  \ex^{(1)}(7;P)+ \ex^{(1)}(n-7;P)= 20+ \binom{n-8}{2}<4+\binom{n-5}{2} = |H_n|.
$$

Finally, for $\mathbf{m\ge 8}$ we have $n\ge 16$ and, by (\ref{ml}),
$$
\begin{aligned}
|H|\le \ex^{(1)}(m;P)+ \ex^{(1)}(n-m;P)=\binom{m-1}{2}+ \binom{n-m-1}{2}\le\binom 72+\binom{n-9}2 \\
<\binom 32+\binom{n-5}2
<4+\binom{n-5}{2} = |H_n|.
\end{aligned}
$$

\bigskip

\subsection{Preparations for the proof of Lemma \ref{con}}\label{prepa}

Under the assumptions on $H$ in Lemma \ref{con}, let $Q$ be a copy of $P_2$ in $H$ such that there is at least one edge disjoint from $U=V(Q)$.
We know that $Q$ exists, because  $ P_2\cup K_3\subset H$. Let $V=V(H)$ and $W=V\setminus U$.
  Further, let $W_0$ be the set of vertices of degree zero in $H[W]$ and $W_1=W\setminus W_0$.
(see Fig. \ref{FigR6}).  Note that, by definition,  $H[W]=H[W_1]$ and $|W_1|\ge3$.

\bigskip
\begin{figure}[!ht]
\centering
\includegraphics [width=9cm]{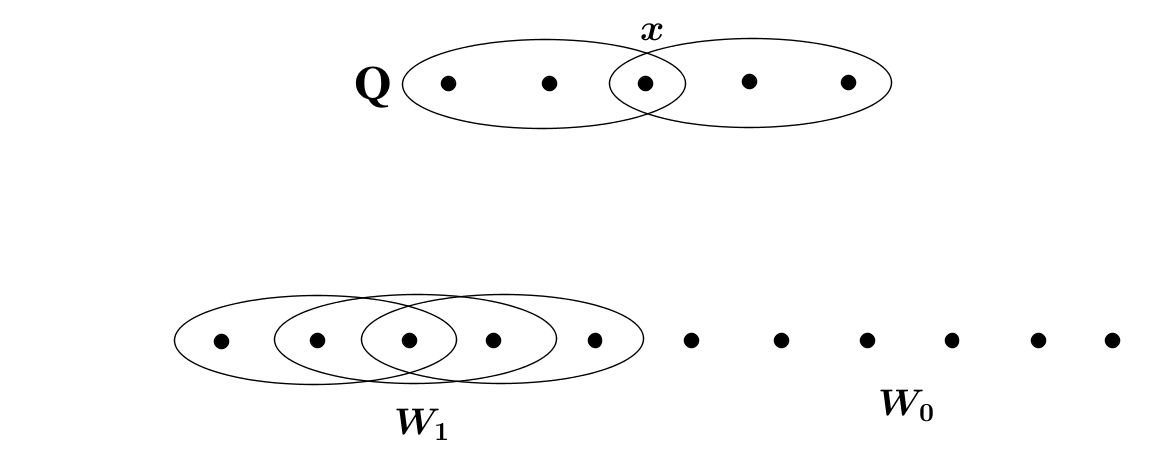}
\caption{Set-up for the proof of Lemm \ref{con}} \label{FigR6}
\end{figure}

\medskip

\noindent We also split the set of edges of $H$.
First, notice that, since $H$ is $P$-free, there is no edge with one vertex in each $U$, $W_0$, and $W_1$. Let for $i=0,1$, $H_i$ be the sub-3-graph of $H$ composed of the edges intersecting both $U$ and $W_i$. Then, clearly,
\begin{equation}\label{HHH}
    H=H[U]\cup H[W]\cup H_0\cup H_1,
\end{equation}
with all four parts edge-disjoint.

Let $x$ be the vertex of degree two in $Q$. If for some $h\in H_0\cup H_1$ we have $|h\cap U|=1$, then $h\cap U = \{x\}$, since otherwise $h$ together with $Q$ would form a copy of $P$ in $H$. We let
$$
F^0=\{h\in H_0\cup H_1: h\cap U=\{x\}\}.
$$

Also, the edges $h\in H_0\cup H_1$ with $|h\cap U|=2$ must be such that the pair $h\cap U$ is contained in an edge of $Q$,
since otherwise $h$ together with  $Q$ would form a copy of $C$ in $H$. For $k=1,2$, define
$$
F^k=\{h\in H_0\cup H_1: \quad |h\cap U\setminus\{x\}|=k\}.
$$
We have $H_0\cup H_1=F^0\cup F^1\cup F^2$ (see Fig.
\ref{FigR7}.) Further, for $i=0,1$ and $k=0,1,2$, we set
$$F_i^k=F^k\cap H_i.$$

\bigskip
\begin{figure}[!ht]
\centering
\includegraphics [width=7cm]{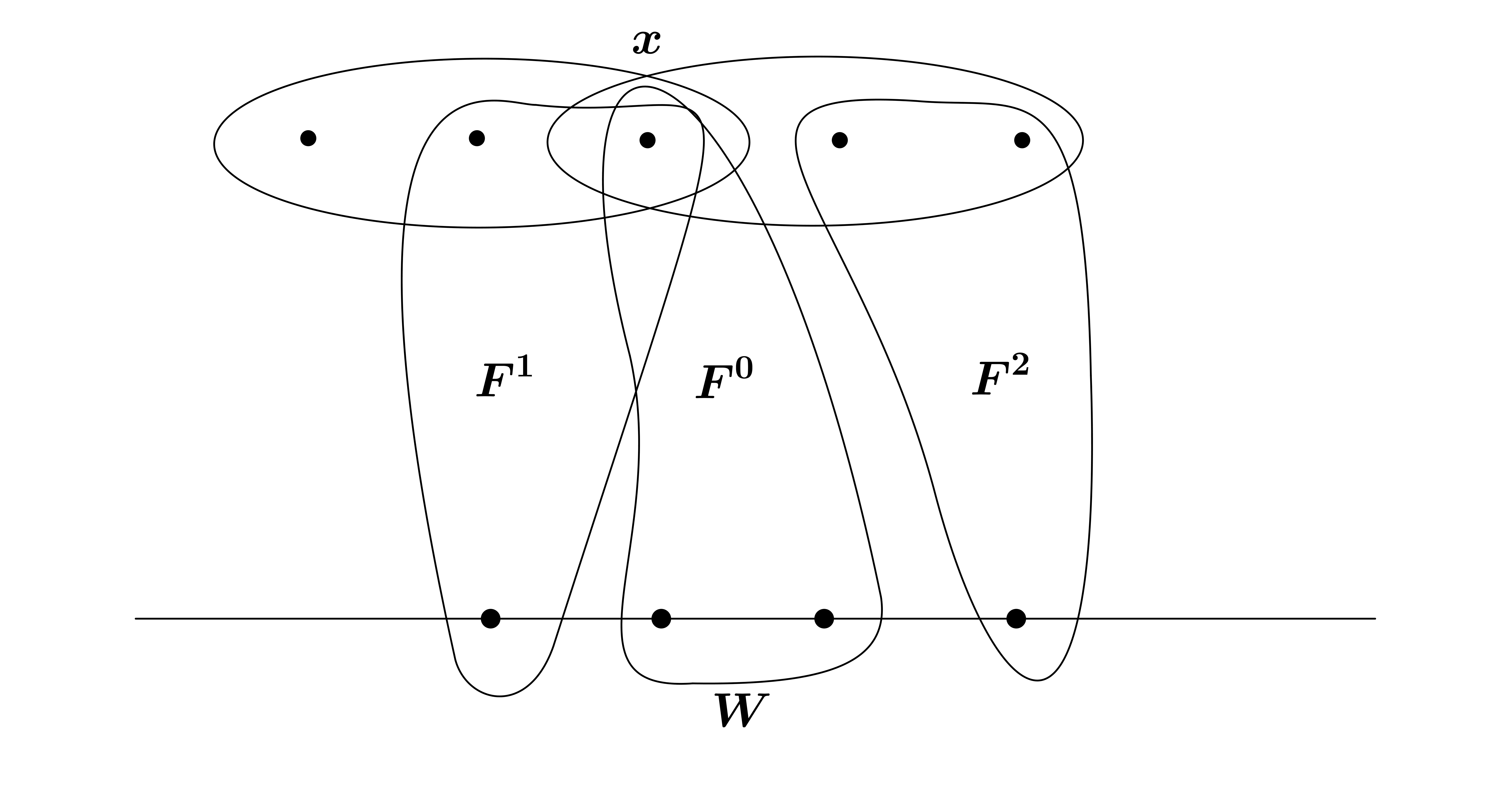}
\caption{Three types of edges in $H_0\cup H_1$} \label{FigR7}
\end{figure}

   \noindent  Note that, since $H$ is $P$-free, $F^1_1=\emptyset$ and thus,
\begin{equation}\label{r6}
H_1=F_1^0\cup F_1^2.
\end{equation}
Observe also that, because $H$ is connected, $H_1\neq\emptyset$.
 Consequently, since the presence of any edge of $H_1$ forbids at least 4 edges of $H[U]$,
 \begin{equation}\label{hu1}
 |H[U]|\le 6.
 \end{equation}
\medskip

In \cite{JPRr} the authors have proven the following bound on the number of edges in~$H_1$:

\begin{equation}\label{r4}
|H_1|\le 2|W_1|-3.
\end{equation}
We use  (\ref{r4}) to estimate $|H[W]| + |H_1|$.
\begin{fact}\label{o5}
Set $|W_1|=z$. We have
\begin{equation}\label{e5}
|H[W]| + |H_1|\le
\left\{\begin{array}{ll}
\binom{z}{3} + 2z -3 &\textrm{for } 3\le z\le 5,\\
\binom{z-1}{2} + 2z -3 &\textrm{for } 6\le z\le 7,\\
\frac{(z-1)^2}{2}+2 &\textrm{for } z\ge 8.
\end{array}\right.
\end{equation}
\end{fact}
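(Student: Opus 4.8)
The plan is to treat the three ranges of $z$ separately, the first two being immediate and the third carrying all of the difficulty. Throughout I use that $H[W]=H[W_1]$ inherits $\{P,C\}$-freeness from $H$, and I take the bound $|H_1|\le 2z-3$ from (\ref{r4}) for granted. For $3\le z\le5$ there is nothing to optimise: $H[W]$ is a sub-3-graph of the complete 3-graph on $z<6$ vertices, so $|H[W]|\le\binom z3$, and adding $2z-3$ gives the first line. For $6\le z\le7$ I would invoke $C$-freeness: since $z\ge6$, Theorem \ref{c3} yields $|H[W]|\le\binom{z-1}2$, and again adding $2z-3$ reproduces the second line exactly. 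No interaction between $H[W]$ and $H_1$ is needed in either range.

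For $z\ge8$ the naive sum $\binom{z-1}2+(2z-3)$ overshoots the target $\tfrac{(z-1)^2}2+2$ by $\tfrac{3z-9}2$, so a genuine trade-off between $|H[W]|$ and $|H_1|$ must be exploited. I would first record two structural consequences of the fixed copy $Q$ together with $\{P,C\}$-freeness. Write $x$ for the degree-two vertex of $Q$ and $\{p,q\},\{r,s\}$ for the two outer pairs, and recall $H_1=F_1^0\cup F_1^2$ by (\ref{r6}). (i) If $\{x,u,v\}\in F_1^0$, then no edge of $H[W]$ may meet $\{u,v\}$ in exactly one vertex: such an edge $\{u,a,b\}$ together with $\{x,u,v\}$ and the $Q$-edge $\{x,r,s\}$ would form a copy of $P$. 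Thus every $F_1^0$-edge \emph{glues} its two $W_1$-endpoints in $H[W]$. (ii) If $\{p,q,w\}\in F_1^2$ (or $\{r,s,w\}$), then $w$ is not an end-vertex of any loose $P_2$ inside $H[W]$: two edges $e_1,e_2\subseteq W_1$ with $w\in e_2\setminus e_1$ and $|e_1\cap e_2|=1$ would extend, via $\{p,q,w\}$, to a copy of $P$. Hence $|F_1^2|\le 2|B|$, where $B\subseteq W_1$ is the set of vertices that are not $P_2$-endpoints in $H[W]$.

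With these in hand I would split on the size of $H[W]$. By Theorem \ref{c3}, $|H[W]|\le\binom{z-1}2$, with equality only for $H[W]=S_z$. In the star case, every vertex other than the centre $c$ is a $P_2$-endpoint and every pair is separated by some edge, so (i) forces $F_1^0=\emptyset$ while (ii) forces $F_1^2\subseteq\{\{p,q,c\},\{r,s,c\}\}$; hence $|H_1|\le2$ and $|H[W]|+|H_1|\le\binom{z-1}2+2\le\tfrac{(z-1)^2}2+2$. Away from the star I would use the slack: if $|H[W]|\le\tfrac{z^2-6z+11}2=\binom{z-1}2-\tfrac{3z-9}2$, then the crude bound $|H_1|\le2z-3$ already finishes. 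The only remaining regime is that of near-extremal $C$-free $H[W]$, whose edge count lies within $\tfrac{3z-9}2$ of $\binom{z-1}2$.

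This near-extremal regime is where I expect the real work. The idea is a stability statement: a $C$-free 3-graph on $z$ vertices whose edge count is within a linear margin of the maximum must be close to the star $S_z$, in particular must have a vertex of nearly full degree through which almost all edges pass. For such $H[W]$ the set $B$ of non-$P_2$-endpoints stays of bounded size (outside the near-centre, virtually every vertex becomes a $P_2$-endpoint), so (ii) keeps $|F_1^2|$ small; and by the gluing observation (i) each $F_1^0$-edge is expensive, since forcing a pair to co-occur in a near-star destroys a linear number of star edges, so the deficiency of $H[W]$ pays for every $F_1^0$-edge well within the margin $\tfrac{z-1}2$. Quantifying this trade-off — showing that each unit by which $|H_1|$ exceeds $2$ is compensated by a comparable loss of edges in $H[W]$, up to the additive slack — is the crux of Fact \ref{o5}; the easy ranges and the exact star case above are merely the scaffolding around it.
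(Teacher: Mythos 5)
Your handling of the ranges $3\le z\le 5$ and $6\le z\le 7$ coincides with the paper's (crude bound $|H_1|\le 2z-3$ from (\ref{r4}) plus $\binom z3$, resp.\ Theorem \ref{c3}), and your two structural observations for $z\ge 8$ --- that an $F_1^0$-edge forces its $W_1$-pair to be ``glued'' (the paper calls such pairs \emph{nonseparable}), and that an $F_1^2$-edge forces its $W_1$-vertex to be no $P_2$-endpoint --- are exactly the right ones. But your proof has a genuine gap, which you yourself flag: the entire regime where $H[W]$ is not the full star $S_z$ yet has more than $\binom{z-1}{2}-\frac{3z-9}{2}$ edges is deferred to an unproven ``stability statement'' for near-extremal $C$-free 3-graphs. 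No such stability result is quoted or proved, and this regime is not vacuous: it contains, in particular, every proper sub-star of $S_z$ of small deficiency, to which your full-star argument does not apply as written (your claims ``every pair is separated'' and ``every non-centre vertex is a $P_2$-endpoint'' use completeness of the centre's link). So the crux of Fact \ref{o5} is exactly the part you left open.

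The paper closes this case without any stability argument, by a different split. Instead of ``full star vs.\ rest'' it splits on $H[W]\subseteq S_z$ (any sub-star) versus $H[W]\nsubseteq S_z$. For sub-stars it shows $|F_1^2|\le 2$ and, via the nonseparable-pairs count, $|F_1^0|\le\left\lfloor\frac{z-1}{2}\right\rfloor$, giving $|H[W]|+|H_1|\le\binom{z-1}2+2+\left\lfloor\frac{z-1}2\right\rfloor\le\frac{(z-1)^2}2+2$; note that the extra $\frac{z-1}{2}$ of room in the stated bound (as compared with $\binom{z-1}{2}+2$) exists precisely to absorb these possible $F_1^0$-edges, so aiming at $|H_1|\le 2$ as you do is both unnecessary and, for proper sub-stars, not justified by your argument. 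For $H[W]\nsubseteq S_z$ the paper invokes black boxes you did not use: if $H[W]\not\supseteq M$, the Hilton--Milner bound (Theorem \ref{ntif}) gives $|H[W]|\le 3z-8$; if $H[W]\supseteq M$, Lemma \ref{PCM} gives $|H[W]|\le 4+\binom{z-4}2$ (for $z\ge11$, with smaller values below). Both are far enough below $\binom{z-1}{2}$ that the crude $|H_1|\le 2z-3$ finishes --- in other words, these two results show that your feared ``near-extremal non-star'' graphs are all sub-stars, which is the dichotomy your proposal is missing. If you want to complete your route, you should either prove the stability claim you invoke or, more economically, replace it by the sub-star/non-sub-star split together with Theorem \ref{ntif} and Lemma \ref{PCM}.
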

\proof For $3\le z \le 5$, the above inequality is an immediate consequence of (\ref{r4}),  whereas for $6\le z \le 7$, we use  (\ref{r4}) and the bound $|H[W]|\le\binom{z-1}2$, stemming from Theorem~\ref{c3}. For $z\ge 8$ we have to consider two cases.
Suppose first that $H[W]\subseteq S_{z}$ with the center~$v\in W_1$. Since $H$ is $P$-free, every  edge $h\in F^2_1$  must have
    $h\cap W_1= \{v\}$. Hence, $
    |F^2_1|\le 2.
    $
   Moreover,  if $e\in F^0_1$, then the pair $e\cap W_1$ must be \emph{nonseparable} in $H[W_1]$, that is, every edge of $H[W_1]$ must contain both these vertices or none.  Since, as it can be easily proved, there are at most $\left\lfloor\frac{z-1}{2}\right\rfloor$ nonseparable pairs in $W_1$,
   $$
   |F^0_1|\le \left\lfloor\frac{z-1}{2}\right\rfloor.
   $$
  Consequently,
 \begin{equation}\label{eq7}
    |H_1|\le 2+ \left\lfloor\frac{z-1}{2}\right\rfloor
    \end{equation}
and, again using Theorem \ref{c3},

\begin{equation}\label{e8}
|H[W]|+|H_1|\le \binom{z-1}{2} + 2+ \left\lfloor\frac{z-1}{2}\right\rfloor  \le \frac{(z-1)^2}{2}+2.
\end{equation}
Otherwise, either $H[W]\not\supseteq M$, and then, by Theorem \ref{ntif},
\begin{equation}\label{e6}
|H[W]|\le 3z-8,
\end{equation}
or $H[W] \supseteq M$ and then, by Lemma \ref{PCM},
\begin{equation}\label{e7}
|H[W]|\le \mathrm{ex}(n;\{P,C\}|M)=
\left\{\begin{array}{ll}
2z-4 &\textrm{for } 8\le z\le 9,\\
20 &\textrm{for }z=10,\\
4 + \binom{z-4}{2} &\textrm{for } z\ge 11.
\end{array}\right.
\end{equation}
Consequently, by (\ref{r4}),
\begin{equation}\label{e9}
|H[W]|+|H_1|\le
\left\{\begin{array}{ll}
4z-7 &\textrm{for } 8\le z\le 9,\\
37 &\textrm{for } z=10,\\
\binom{z-4}{2} + 2z +1&\textrm{for } z\ge 11,
\end{array}\right.
\end{equation}
and it is easy to check that for $z\ge 8$, the R-H-S of (\ref{e9}) is smaller than
$ \frac{(z-1)^2}{2}+2.
$ \qed

\subsection{Proof of Lemma \ref{con}}
We adopt the notation from  the previous subsection. In particular, recall that $z=|W_1|$. Additionally, we set $s=|W_0|$.
Our plan is to first give the proof in three `smallest'  cases:  $s=0$,  $z=3$, and $n\in\{13,14,15\}$.

\noindent{\bf$\mathbf{s=0}$ ($W_0=\emptyset$).}  Then $H_0=\emptyset$,  $z=n-5\ge 8$, and, by (\ref{HHH}), (\ref{hu1}), and  (\ref{e5}),

$$
|H| = |H[U]|+|H[W]| + |H_1|\le 6 + \frac{(z-1)^2}{2}+2.
$$
 This implies that for $n=13$, $|H|\le 32<35$, for $n=14$, $|H|\le 40<41$, while for $n\ge 15$, it can be easily checked that
 $$|H|\le  6 + \frac{(z-1)^2}{2}+2< 4+\binom{z}{2} = |H_n|.$$
 
 Therefore, from now on we will be assuming that
 $W_0\neq \emptyset,$ or $s\ge1$.
The proofs of the other two special cases, $z=3$ and $n\in\{13,14,15\}$, are both split into two subcases with respect to $F^2_1$.
 We begin with bounding the number of edges in $H[U\cup W_0]$ when $F^2_1\neq \emptyset$.
 
 \begin{fact}\label{UW0} If  $F^2_1\neq \emptyset$, then
 \begin{equation}\label{e4}
|H[U\cup W_0]|\le
\left\{\begin{array}{ll}
8 &\textrm{for } s=1,\\
3s+7 &\textrm{for } 2\le s\le 4,\\
\binom{s+2}{2}+1 &\textrm{for } s\ge 5.
\end{array}\right.
\end{equation}
 \end{fact}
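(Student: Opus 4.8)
The plan is to exploit the decomposition $H[U\cup W_0]=H[U]\cup H_0$. Since $H$ has no edge meeting all three of $U,W_0,W_1$, and since $W_0$ consists of isolated vertices of $H[W]$ (so there are no edges inside $W_0$), every edge contained in $U\cup W_0$ lies either in $H[U]$ or in $H_0=F_0^0\cup F_0^1\cup F_0^2$. Thus I only need to bound $|H[U]|+|H_0|$. To activate the hypothesis, I fix a witnessing edge $e_0\in F^2_1$; by the symmetry of $Q$ under swapping $\{a,b\}$ with $\{c,d\}$ I may assume $e_0=\{a,b,w\}$ with $w\in W_1$, and I fix an edge $g=\{w,p,q\}$ of $H[W]$ through $w$, which exists precisely because $w\in W_1$. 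The pair $(e_0,g)$ is a copy of $P_2$ with centre $w$ and free end $\{a,b\}$, and it drives all the forbidden-configuration arguments.

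First I would record the local restrictions forced by $P$- and $C$-freeness. \emph{(Twins.)} Any edge $f\subseteq U\cup W_0$ containing exactly one of $a,b$ produces the loose path $f-e_0-g$, a copy of $P$; hence inside $U\cup W_0$ the vertices $a,b$ always occur together. In particular $\{x,a,w_i\},\{x,b,w_i\},\{a,w_i,w_j\},\{b,w_i,w_j\}$ are all excluded, and among the $\binom53$ triples of $U$ only $\{a,b,x\},\{a,b,c\},\{a,b,d\},\{x,c,d\}$ survive, so $|H[U]|\le 4$. \emph{(Tails at $c,d$.)} An edge $\{c,w_i,w_j\}$ with $w_i,w_j\in W_0$ gives the path $\{c,w_i,w_j\}-\{x,c,d\}-\{x,a,b\}$, again a copy of $P$, and symmetrically for $\{d,w_i,w_j\}$. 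Together with the twin restriction this shows that the only double-$W_0$ edges are the $\{x,w_i,w_j\}$. Consequently the surviving edges of $H_0$ are exactly $F_0^0=\{\{x,w_i,w_j\}\}$ (at most $\binom s2$), the two families $\{x,c,w_i\},\{x,d,w_i\}$ inside $F_0^1$, and $\{a,b,w_i\},\{c,d,w_i\}$ inside $F_0^2$.

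The heart of the matter is that these surviving edges are still strongly incompatible, and the three regimes of $s$ record how much can coexist. The useful observations have the form: a single edge $\{c,d,w_i\}$ together with any star edge $\{x,w_i,w_k\}$, or with any $\{x,c,w_j\},\{x,d,w_j\}$ for $j\ne i$, completes a copy of $P$ routed through $\{x,a,b\}$ or $\{x,c,d\}$; and each $\{a,b,\cdot\}$ or $\{c,d,\cdot\}$ edge beyond the always-present $\{a,b,x\}$ likewise forces the deletion of several edges of the star at $x$. The clean way to finish is a charging argument: the edges through $x$ with both other endpoints in $W_0\cup\{c,d\}$ form a sub-star of the full star on $\{x\}\cup W_0\cup\{c,d\}$, which has exactly $\binom{s+2}2$ edges, and I would show that for $s\ge5$ every added non-star edge destroys at least as many star edges as it contributes, leaving a surplus of at most one edge, giving $\binom{s+2}2+1$. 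For $s=1$ one simply adds $|H[U]|\le4$ and $|H_0|\le4$ (here $F_0^0=\emptyset$, so no incompatibilities are needed) to get $8$, while for $2\le s\le4$ one tallies the few surviving families directly, the incompatibilities trimming the naive total down to $3s+7$.

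I expect the main obstacle to be exactly this compatibility bookkeeping for $2\le s\le4$ and the exchange argument at the transition $s=5$, where $3s+7=\binom{s+2}2+1=22$: one must check that no copy of $P$ or $C$ is created across the mixed edge types $F_0^1$, $F_0^2$, and $H[U]$, and that the additive constants are sharp rather than a loose estimate. Organizing the count by the set $A\subseteq W_0$ of vertices $w_i$ carrying a non-star edge (some $\{a,b,w_i\}$ or $\{c,d,w_i\}$), and using that each $w_i\in A$ blocks all star edges through $w_i$, should make the trade-off transparent and reduce the whole problem to a handful of finite checks.
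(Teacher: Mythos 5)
Your structural groundwork is correct and is genuinely different from the paper's route: the twins observation (an edge of $H[U\cup W_0]$ meeting $\{a,b\}$ in exactly one vertex closes a copy of $P$ with $e_0$ and $g$), the exclusion of edges like $\{c,w_i,w_j\}$, the resulting classification of the surviving edges of $H_0$, and the complete count for $s=1$ all check out. The paper avoids this bookkeeping entirely: it splits according to whether $H[U\cup W_0]$ is contained in the star centred at $x$ (then the same twins idea gives $\binom{s+2}{2}+1$), is intersecting but not contained in that star (since both edges of $Q$ lie in $H[U]$, no other centre is possible, so the Hilton--Milner bound of Theorem \ref{ntif} gives $3s+7$ for $s\ge2$), or contains $M$ (then Lemma \ref{PCM} gives bounds dominated by the claimed ones). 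Your plan replaces these two citations by an explicit incompatibility analysis, which is legitimate in principle.

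However, as written the argument has a genuine gap: the decisive counting for $2\le s\le 4$ and $s\ge5$ is only promised, and the two principles you propose to run it on do not hold as stated. First, ``each $w_i\in A$ blocks all star edges through $w_i$'' is false: $\{a,b,w_i\}$ (respectively $\{c,d,w_i\}$) does not exclude $\{x,c,w_i\}$ or $\{x,d,w_i\}$ --- those pairs either share two vertices with it or admit no guaranteed third edge completing a path, so only conditional (ternary) constraints involve them. Second, and more seriously, you never exhibit any forbidden configuration involving $\{a,b,c\}$ or $\{a,b,d\}$; the phrase ``likewise forces the deletion of several edges of the star at $x$'' is not backed by a path. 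Without such a constraint your claimed bound fails: the full star on $\{x\}\cup W_0\cup\{c,d\}$ together with $\{x,a,b\},\{a,b,c\},\{a,b,d\}$ satisfies every incompatibility you actually verified, yet has $\binom{s+2}{2}+3$ edges, exceeding $\binom{s+2}{2}+1$ for $s\ge5$. The missing ingredient is that the triples $\{a,b,c\}$, $\{x,c,d\}$, $\{x,w_k,w_l\}$ form a copy of $P$ (using that $\{x,c,d\}$ is an edge of $Q$, hence always present), so $\{a,b,c\}$ or $\{a,b,d\}$ excludes all $\binom{s}{2}$ edges of the form $\{x,w_k,w_l\}$. Finally, your per-edge charging (``every added non-star edge destroys at least as many star edges as it contributes'') cannot be applied edge by edge, because the kill-sets overlap: $\{a,b,w_1\}$ and $\{a,b,w_2\}$ both destroy only edges of the form $\{x,w_i,w_k\}$ and share the victim $\{x,w_1,w_2\}$; one must count the union of the killed sets, e.g.\ as $\binom{s}{2}-\binom{s-|A|}{2}$, and check the inequality case by case in $(\alpha,\beta,\gamma)$. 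With these repairs your route does go through, but the proposal as it stands does not establish the stated bound.
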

\proof 
Let $u$ and $v$ be some two vertices of $U$ belonging to the same edge of $F^2_1$.
If $H[U\cup W_0]\subseteq S_{s+5}$ (with the center in $x$), then, since $H$ is $P$-free, the only edge of $H[U\cup W_0]$ containing  $u$ or $v$ is $\{x,u,v\}$. Hence
\begin{equation}\label{e1}
|H[U\cup W_0]|\le \binom{s+2}{2}+1.
\end{equation}
Otherwise, either $ H[U\cup W_0]\not \supseteq M$ and, assuming that $s\ge2$, and thus $|U\cup W_0|\ge7$, by Theorem \ref{ntif},
\begin{equation}\label{e2}
|H[U\cup W_0]|\le \mathrm{ex}^{(2)}(s+5;M)=3(s+5)-8=3s+7,
\end{equation}
or $ H[U\cup W_0] \supseteq M$ and, by Lemma \ref{PCM}, this time including $s=1$,
\begin{equation}\label{e3}
|H[U\cup W_0]|\le \mathrm{ex}(s+5;\{P,C\}|M)=
\left\{\begin{array}{ll}
2s+6 &\textrm{for } 1\le s\le 4,\\
20 &\textrm{for }s=5,\\
4 + \binom{s+1}{2} &\textrm{for } s\ge 6.
\end{array}\right.
\end{equation}
For $s=1$ we argue  as follows. Since  $H$ is $P$-free, every edge of $H[U\cup W_0]$ must contain either both of $u$ and $v$ or none.
 There are only 8 such edges and so, $|H[U\cup W_0]|\le 8$. In summary, (\ref{e4}) follows by (\ref{e1}), (\ref{e2}),  (\ref{e3}), and the above bound  for $s=1$. \qed

\bigskip

\noindent{\bf$\mathbf{z=3}$,\;$\mathbf{F^2_1\neq \emptyset}$.} 
We  combine bounds (\ref{e5}) of Fact \ref{o5} and (\ref{e4})  to estimate $|H|$.
Since $s=n-5-z\ge 13-8=5$,
$$
\begin{aligned}
|H|=|H[U\cup W_0]| +|H_1|+|H[W]| &\le \binom{s+2}{2}+1+\binom{3}{3} + 2\cdot 3 -3=\\
\binom{n-6}{2}+5&< |H_n|.
\end{aligned}
$$

\bigskip

\noindent{\bf$\mathbf{z=3}$,\;$\mathbf{F^2_1= \emptyset}$.} Then, by (\ref{r6}), $|H_1|\le |F_1^0|\le3$.
Since $H\nsubseteq \co(n)$ we have $H[U\cup W_0]\nsubseteq S_{s+5}$ and consequently, by Theorem \ref{ex2},
$$
H[U\cup W_0]\le \ex^{(2)}(s+5;P)=\left\{\begin{array}{ll}
20 + \binom{s-1}{3} &\textrm{for } \quad10\le s+5\le 12,\\
4+\binom{s+1}{2} &\textrm{for } \quad s+5\ge 13
\end{array}\right.
$$
Hence, for $13\le n \le 15$ ($10\le s+5\le 12$)
$$
|H|=|H[U\cup W_0]| +|H_1|+|H[W]| \le 20 + \binom{s-1}{3}+3+1=24+\binom{n-9}{3}<|H_n|,
$$
while for $n\ge16$ ($s+5\ge13$)
$$
\begin{aligned}
|H|=|H[U\cup W_0]| +|H_1|+|H[W]| &\le 4+\binom{s+1}{2}+3+1=\\
8+ \binom{n-7}{2}&< 4+\binom{n-5}{2}=|H_n|.
\end{aligned}
$$
Consequently for the rest of the proof we will be assuming that $z\ge 4$ (and $s\ge1$).
\medskip

\bigskip

\noindent{\bf$\mathbf{n\in\{13,14,15\}}$,\;$\mathbf{F^2_1\neq \emptyset}$.}
We again  combine bounds (\ref{e5}) of Fact \ref{o5} and (\ref{e4})  to estimate $|H|$.
For $n=13=5+s+z$, where $4\le z\le 7$, the worst case is when  $z=7$ and $s=1$, in which we get
$$|H|\le 34<35.$$
For $n=14=5+s+z$, where $4\le z\le 8$, the worst case is when  $z=7$ and $s=2$, and so
$$|H|\le 39<41.$$
For $n=15=5+s+z$, where $4\le z\le 9$,
 $$|H| \le 42<49.$$

\noindent{\bf$\mathbf{n\in\{13,14,15\}}$,\;$\mathbf{F^2_1=\emptyset}$.}
For $z=4$ by an easy inspection one can show that $|H_1| + |H[W]| \le 1 + 2=3$. Also, since $H$ is $C$-free, by Theorem~\ref{c3}, $|H[U\cup W_0]|\le\binom{s+4}2$.
Therefore, for $n=13$, $|H|\le 28+3<35$, for $n=14$, $|H|\le 36+3<41$, and for $n=15$, $|H|\le 45+3<49$.

\medskip

Finally, for $z\ge5$, we may apply  Theorem~\ref{c3} to $H[W_1\cup\{x\}]$, obtaining the bound
$$
|H_1| + |H[W]| = |H[W_1\cup\{x\}]|\le \binom{z}{2}.
$$
In summary,
$$
|H|=|H[U\cup W_0]| +|H_1|+|H[W]| \le \binom{s+4}{2} + \binom{z}{2},
$$
and consequently, by choosing optimal pairs $(z,s)$,
 for $n=13=5+s+z$, where $5\le z\le 7$, we get
 $$|H|\le 31<35,$$
 for $n=14=5+s+z$, where $5\le z\le 8$, we get
 $$|H|\le 38<41,$$
 whereas for $n=15=5+s+z$, where $5\le z\le 9$,
 $$|H| \le 46<49.$$

 Thus, we are done with the proof of Lemma \ref{con} in all three cases:  $s=0$,  $z=3$, and $n\in\{13,14,15\}$.  In fact, recalling our argument from Section 4.2, we have actually proved Theorem \ref{ex3_cale} for all $n\le15$.
 To complete the proof of Lemma \ref{con} for  the remaining values of $n$, we need only to prove Fact \ref{final} below. The proof is by induction on $n$, and we include the case $n=15$ there to serve as the inductive base.

 Note that compared to Lemma \ref{con}, we now relax the assumption of connectivity, replacing it with that of $H_1\neq\emptyset$. Also, although we have already proved Lemma \ref{con} for $s=0$, or $W_0=\emptyset$, we do not impose the opposite  assumption here. Both these relaxations are made  to accommodate  the inductive proof below. Finally, note that we may drop the assumption that $H \nsubseteq \co(n)$, as it follows from the fact that $|W_1|\ge4$ (a comet cannot contain two edges  not containing the center). For a 3-graph $G$ and a vertex $v\in V(G)$, let $G(v)$ denote the \emph{link graph} of $v$ in $G$, that is, the set of pairs of vertices which together with $v$ form an edge of $G$.
\begin{fact}\label{final}
If $H$ is an $n$-vertex, $n\ge15$, $\{P,C\}$-free 3-graph such that  $H\supset P_2\cup K_3$,  and, under the notation of Subsection \ref{prepa}, $z=|W_1|\ge4$ and $H_1\neq\emptyset$, then
	$$
	|H|< 4+\binom{n-5}{2} = |H_n|.
	$$
\end{fact}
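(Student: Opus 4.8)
The plan is to argue by strong induction on $n$, taking $n=15$ (settled by the direct bounds of the preceding cases) as the base and treating $n\ge16$ in the inductive step. Throughout I keep the decomposition from Subsection \ref{prepa}: since every edge meeting $W_0$ lies in $H_0\subseteq H[U\cup W_0]$, one has $|H|=|H[U\cup W_0]|+\bigl(|H[W]|+|H_1|\bigr)$, with $n=5+s+z$, $s=|W_0|$, $z=|W_1|\ge4$. The governing idea is a dichotomy: when a vertex of $W_0$ can be deleted cheaply I reduce to $n-1$ and invoke the inductive hypothesis, and otherwise (when $W_0=\emptyset$ or $z$ is small) I bound $|H|$ outright using Facts \ref{o5} and \ref{UW0} together with Theorem \ref{c3}. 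This is exactly why the hypotheses were relaxed from Lemma \ref{con}: after deleting a vertex the graph may lose connectivity or empty out $W_0$, but it still satisfies the weaker hypotheses of the present statement.

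For the inductive step I would first dispose of the case $s\ge1$, $z\ge6$ by deletion. The key sub-claim is that \emph{every} $v\in W_0$ satisfies $\deg_H(v)\le s+5$. Indeed, $v$ is isolated in $H[W]$, so each edge through $v$ lies in $H_0$, meets $U$ and avoids $W_1$; classifying such an edge by its trace on $U$, $P$-freeness forces a single $U$-vertex to be $x$ (at most $s-1$ edges $\{v,x,w\}$, $w\in W_0$), while a pair in $U$ must either contain $x$ (at most the four edges $\{v,x,p_i\}$) or be one of the two pairs lying inside an edge of $Q$ (at most two edges), by the same reasoning that gave $F^1_1=\emptyset$ and the membership of $F^2$-pairs in $Q$. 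Summing gives $\deg_H(v)\le(s-1)+4+2=s+5\le s+z-1=n-6$ precisely when $z\ge6$. Deleting such a $v$ keeps $U$, $Q$, the edge of $H[W_1]$, the whole of $H_1$, and the value of $z$ intact, so $H-v$ again meets the hypotheses with $n-1\ge15$; induction yields $|H-v|<|H_{n-1}|=4+\binom{n-6}{2}$, whence $|H|=|H-v|+\deg_H(v)<\bigl(4+\binom{n-6}{2}\bigr)+(n-6)=4+\binom{n-5}{2}=|H_n|$.

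It then remains to bound $|H|$ directly in the two regimes the deletion does not reach. If $s=0$ then $z=n-5\ge11$, and $|H[U]|\le6$ together with the bound $\tfrac{(z-1)^2}{2}+2$ of Fact \ref{o5} gives $|H|\le6+\tfrac{(z-1)^2}{2}+2<4+\binom{z}{2}=|H_n|$. If instead $z\in\{4,5\}$ (which forces $s\ge6$), I split on $F^2_1$. When $F^2_1\neq\emptyset$ I combine Fact \ref{UW0} for $|H[U\cup W_0]|$ with Fact \ref{o5} for $|H[W]|+|H_1|$; the resulting inequality is a small polynomial check whose slack grows with $s$. When $F^2_1=\emptyset$ and $z=5$ I use that $H[W_1\cup\{x\}]$ is $C$-free on six vertices, so $|H[W]|+|H_1|=|H[W_1\cup\{x\}]|\le\binom{5}{2}$, and that $H[U\cup W_0]$ is $C$-free, so $|H[U\cup W_0]|\le\binom{s+4}{2}$; the difference $|H_n|-|H|\ge s(z-4)-2=s-2>0$ since $s\ge6$.

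The delicate point, and what I expect to be the main obstacle, is the tight case $F^2_1=\emptyset$, $z=4$, where the crude bound $|H[W_1\cup\{x\}]|\le\binom{4}{2}=6$ only yields $|H|\le\binom{s+4}{2}+6$, two more than allowed. Here I would extract the missing edges from the hitherto unused hypothesis $H_1\neq\emptyset$. Since $z=4$, all four vertices of $W_1$ have positive degree in $H[W_1]$, so $|H[W_1]|\ge2$; and for any edge $f\subseteq W_1$ and any $\{x,a,b\}\in H_1$ with $|\{a,b\}\cap f|=1$, the three edges $f$, $\{x,a,b\}$, and an edge of $Q$ form a copy of $P$. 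This constraint forces every pair carried by $H_1$ to meet every edge of $H[W_1]$ in $0$ or $2$ vertices, which on four vertices leaves room for only $|H[W_1]|+|H_1|\le3$; then $|H|\le\binom{s+4}{2}+3<|H_n|$. Making this counting airtight, and carrying the analogous small tightenings through the $n=15$ base and the $F^2_1\neq\emptyset$ polynomial checks, is where the real work lies.
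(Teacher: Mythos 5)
Your proposal is correct, and its skeleton is the paper's: induction on $n$ with base $n=15$, the $s=0$ case disposed of by the earlier direct bound, and otherwise deletion of a vertex $v\in W_0$ of degree at most $n-6$ followed by the inductive hypothesis applied to $H-v$ (which, as you observe, is exactly why the hypotheses of the Fact were relaxed). The genuine difference is how the degree bound is obtained. The paper first proves the dichotomy (\ref{FORF}): for every $v\in W$, either $F^0(v)=\emptyset$ or $F^2(v)=\emptyset$ --- if both were nonempty, an edge of each together with a suitable edge of $Q$ would form a copy of $P$. This yields $|H(v)|\le 4+\max\{2,|W_0|-1\}\le n-6$ for \emph{every} $z\ge4$, so the deletion step settles all of $n\ge16$ in one stroke. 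Your additive count $(s-1)+4+2=s+5$ misses this exclusion, is $\le n-6$ only when $z\ge6$, and therefore forces the supplementary direct cases $z\in\{4,5\}$ (where $s\ge6$). Those cases do close: the slacks are $2s-1$ and $3s-5$ for $F^2_1\neq\emptyset$ with $z=4,5$ respectively, $s-2$ for $F^2_1=\emptyset$, $z=5$, and your argument that $H_1\neq\emptyset$ forces $|H[W_1]|+|H_1|\le 3$ when $z=4$ and $F^2_1=\emptyset$ is sound (every pair carried by $H_1$ must lie inside every edge of $H[W_1]$, giving $|H[W_1]|=2$ and $|H_1|\le1$); it is precisely the ``easy inspection'' the paper invokes in its $n\in\{13,14,15\}$ analysis, here made explicit. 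So both proofs work: the paper's dichotomy buys a uniform, one-case inductive step, while your version pays for its absence with correct but avoidable extra counting in the small-$z$ regime.
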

\begin{proof}
	The proof is by induction on $n$ with the initial step $n=15$ done earlier. Let $n\ge16$.
	It can be easily checked that, since $H$ is $P$-free, for every $v\in W$ either
	\begin{equation}\label{FORF}
	F^0(v)=\emptyset\quad\mbox{ or }\quad F^2(v)=\emptyset.
	\end{equation}
	Moreover, by the definitions of $F^1$ and $F^2$,
	\begin{equation}\label{4and2}
	|F^1(v)|\le 4\quad{and }\quad|F^2(v)|\le 2.
	\end{equation}

If $W_0=\emptyset$, then we are done by an earlier proof (at the beginning of this section). Otherwise,
	fix $v\in W_0$ and observe that, by the remark preceding (\ref{HHH}), $|F^0(v)|\le |W_0|-1$. Thus, by (\ref{FORF}) and (\ref{4and2}), since $|W_0|=n-5-|W_1|\le n-9$,
	\begin{equation}\label{hv}
	|H(v)|=|F^0(v)|+ |F^1(v)|+|F^2(v)|\le 4+  \max\{2,|W_0|-1\}\le 4+n-10=n-6.
	\end{equation}

  Notice that $H-v$ satisfies the assumptions of Fact \ref{final}. Indeed, as the removal of $v$ affects $H_0$ only, in the obtained sub-3-graph we still have both, $H_1\neq \emptyset$ and $|W_1|\ge4$. Consequently, by the induction's assumption and (\ref{hv})

 $$
 |H|=|H-v|+|H(v)|< 4+\binom{n-6}{2} + n-6=4+\binom{n-5}{2} = |H_n|.
 $$
\end{proof}

\end{document}